\theoremstyle{plain}
\newtheorem{theorem}{Theorem}[section]
\newtheorem{corollary}[theorem]{Corollary}
\newtheorem{definition}[theorem]{Definition}
\newtheorem{lemma}[theorem]{Lemma}
\newtheorem{notation}[theorem]{Notation}
\newtheorem{proposition}[theorem]{Proposition}
\newtheorem{remark}[theorem]{Remark}
\numberwithin{equation}{section}
\newcommand{\tr}{\mathop{\mathrm{tr}}}
\newcommand{\diag}{\mathop{\mathrm{diag}}}
\newcommand{\rank}{\mathop{\mathrm{rank}}}
\begin{document}
\title[Non-central Wishart distributions]{On the existence of non-central Wishart distributions}
\author{Eberhard Mayerhofer}
\address{Deutsche Bundesbank, Research Centre, Wilhelm-Epstein-Str.14, 60431 Frankfurt am Main, Germany}
\email{eberhard.mayerhofer@gmail.com}
\thanks{The author currently holds a Marie-Curie fellowship at Deutsche Bundesbank. The research leading to these results
has received funding from the European Community Programme FP7-PEOPLE-ITN-2008 under
grant agreement number PITN-GA-2009-237984 (RISK) and from the WWTF (Vienna Science and Technology Fund) 
 The funding is gratefully acknowledged.}
%\date{\today}
\begin{abstract}
This paper deals with the existence issue of non-central Wishart
distributions which is a research topic initiated by Wishart (1928),
and with important contributions by e.g., L\'evy (1937), Gindikin
(1975), Shanbhag (1988), Peddada \& Richards (1991). We present a
new method involving the theory of affine Markov processes, which
reveals joint necessary conditions on shape and non-centrality
parameter. While Eaton's conjecture concerning the necessary range
of the shape parameter is confirmed, we also observe that it is not
sufficient anymore that it only belongs to the Gindikin ensemble, as
is in the central case. %Thus we disprove a characterization of non-central Wishart distributions by Letac and Massam which appeared 2008 in this journal.
\end{abstract}
\keywords{non-central Wishart distribution, Gindikin ensemble, Wishart processes, Affine processes}
\maketitle

\section{Introduction}
The general non-central Wishart distribution
$\Gamma(p,\omega;\sigma)$ on the cone $S_d^+$ of symmetric positive
semi-definite $d\times d$ matrices is
 defined (whenever it exists) by its Laplace transform
\begin{equation}\label{FLT Mayerhofer Wishart}
\mathcal L (\Gamma(p,\omega;\sigma))(u)= \left(\det(I+\sigma u)\right)^{-p}e^{-\tr(u(I+\sigma
u)^{-1}\omega)},\quad u\in S_d^+,
\end{equation}
were $p\geq 0$ denotes its shape parameter, $\sigma\in S_d^{+}$ is the
scale parameter and the parameter of non-centrality equals
$\omega\in S_d^+$. In the case that $\omega=0$,
$\Gamma(p;\sigma):=\Gamma(p,0;\sigma)$ is called the central Wishart
distribution, which had been introduced in 1928 by Wishart
\cite{Wishart}. In 1937, L\'evy \cite{Levy} showed that
$\Gamma(p;\sigma)$ on $S_2^+$ is not infinitely divisible for
invertible $\sigma$, which means that for some sequence of shape
parameters $p_k\downarrow 0$, $\Gamma(p_k;\sigma)$ cannot exist.
Gindikin \cite{Gindikin}, Shanbhag \cite{Shanbhag} and Peddada \&
Richards \cite{PeddadaRichards91}\footnote{Contrary to
\cite{PeddadaRichards91} we exclude the point mass at zero, i.e. the
Gindikin ensemble does not contain $0$. Also, our notation deviates slightly from theirs, see Section \ref{App}.} subsequently showed that
for non-degenerate $\sigma$,
\begin{equation}\label{eq: Laplacecentralwishart}
\left(\det(I+\sigma u)\right)^{-p}
\end{equation}
can only be the Laplace transform of a non-trivial probability
measure for shape parameters $p$ belonging to the Gindikin ensemble
\[
\Lambda_d=\left\{\frac{j}{2},\quad j=1,2,\dots,
d-2\right\}\cup\left[\frac{d-1}{2},\infty\right).
\]
Aim of this work is to investigate this fundamental existence issue
in the non-central case. We shall show:
\begin{theorem}\label{th: maintheorem}
Let $d\in\mathbb N$, $p> 0$, $\omega,\sigma \in S_d^+$. The
following hold:
\begin{enumerate}
\item \label{part1thmain} Suppose $\sigma$ is invertible. If the right side of \eqref{FLT Mayerhofer Wishart} is the Laplace transform of a non-trivial probability
measure \footnote{It is easy to see that if $\sigma\neq 0$, the
triviality of $\Gamma(p,\omega;\sigma)$ is equivalent to $p=0$, and
$\omega=0$ (in which case we have the point mass at $0$).}
$\Gamma(p,\omega;\sigma)$ on $S_d^+$, then $p\in \Lambda_d$ and
$\rank(\omega)\leq 2p+1$.
\item Conversely, suppose any of the following conditions hold:
\begin{enumerate} 
\item $p\geq \frac{d-1}{2}$,
\item $p<\frac{d-1}{2}$ and $\rank(\omega)\leq 2p$. 
\end{enumerate}
Then the right side
of \eqref{FLT Mayerhofer Wishart} is the Laplace transform of a
non-trivial probability measure $\Gamma(p,\omega;\sigma)$.
\end{enumerate}
\end{theorem}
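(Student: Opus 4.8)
The plan is as follows. \emph{Reductions.} The congruence map $X\mapsto\sigma^{-1/2}X\sigma^{-1/2}$ (defined when $\sigma$ is invertible) carries $\Gamma(p,\omega;\sigma)$ to $\Gamma(p,\sigma^{-1/2}\omega\sigma^{-1/2};I_d)$ and preserves $\rank\omega$, so Part (i) reduces at once to $\sigma=I_d$; more generally $X\mapsto gXg^\top$ carries $\Gamma(p,\omega;\sigma)$ to $\Gamma(p,g\omega g^\top;g\sigma g^\top)$ for $g\in GL_d$, hence, by passing to weak limits, also for singular $g$, and moreover $\Gamma(p,\omega;\sigma)*\Gamma(q,\eta;\sigma)=\Gamma(p+q,\omega+\eta;\sigma)$ whenever the two factors exist. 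I will also use the elementary Gaussian models: for $n\in\mathbb N_0$, $\Gamma(n/2;\sigma)$ is the law of a sum of $n$ independent rank-one centred Gaussian squares, and $\Gamma(n/2,\omega;\sigma)$ is the same with the Gaussians centred at vectors $\mu_1,\dots,\mu_n$ satisfying $\sum_i\mu_i\mu_i^\top=\omega$ — which is possible exactly when $\rank\omega\le n$ (take $\mu_i=\sqrt{\lambda_i}\,v_i$ from the spectral decomposition of $\omega$). When $\rank\omega=r<d$ one may further project onto $\mathrm{range}(\omega)$, reducing $\Gamma(p,\omega;I_d)$ to a non-central Wishart in dimension $r$ with an invertible non-centrality.

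\emph{Part (i).} Given that $\mu:=\Gamma(p,\omega;I_d)$ is a probability measure, I observe that the functions $(t,x)\mapsto\det(I+2tu)^{-p}\exp\bigl(-\tr(u(I+2tu)^{-1}x)\bigr)$ are the Laplace exponents of the transition semigroup of a Wishart-type affine process on $S_d^+$ with vanishing linear drift, constant drift $2pI_d$, and identity diffusion; this is checked by matching the Riccati equations. Using the congruence-invariance of the family $\{\Gamma(p,\cdot\,;I_d)\}$, the scalings $\Gamma(p,\omega;I_d)\rightsquigarrow\Gamma(p,c\omega;cI_d)$, and weak closure, one upgrades the existence of the single measure $\mu$ to the statement that these kernels are genuine probability measures on a state space rich enough that the associated affine Markov process exists and can be started from $\omega$. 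The admissibility classification for affine processes on $S_d^+$ then forces the number-of-degrees-of-freedom parameter $2p$ into $\{0,1,\dots,d-2\}\cup[d-1,\infty)$, i.e.\ $p\in\Lambda_d$ since $p>0$, and forces $\omega$ to lie in the stratum attainable for that parameter, which yields $\rank\omega\le 2p+1$. (The rank bound can alternatively be read off analytically: expanding $\mu$ against the central Wishart of shape $p$ produces the zonal/Laguerre series $\det(I+u)^{-p}\,{}_0F_1(p;\,\cdot\,)$, and positivity of $\mu$ forces the generalized Pochhammer symbols $(p)_\kappa$ to be nonnegative for all partitions $\kappa$ with at most $\rank\omega$ parts; the binding factor is $p-(\rank\omega-1)/2\ge 0$.)

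\emph{Part (ii).} In case (b) the relevant parameter is a half-integer $p=n/2$ with $r:=\rank\omega\le n\le d-2$ (for non-degenerate $\sigma$ this is forced by Part (i); otherwise one first passes to $\mathrm{range}(\sigma)$), and then $\Gamma(p,\omega;\sigma)=\Gamma\bigl(\tfrac{n-r}{2};\sigma\bigr)*\Gamma\bigl(\tfrac r2,\omega;\sigma\bigr)$: the second factor exists by the rank-$r$ non-centred Gaussian model, and the first, a central Wishart of nonnegative half-integer shape, is a sum of centred Gaussian squares, so the convolution exists. In case (a), $p\ge\tfrac{d-1}{2}$: for $p>\tfrac{d-1}{2}$ the non-central Wishart has a genuine density for every $\sigma$ and every $\omega\in S_d^+$ of any rank (the classical ${}_0F_1$ matrix-argument representation, equivalently the transition kernel of the Wishart affine process with $2p\ge d-1$ degrees of freedom, which lives on all of $S_d^+$); the boundary shape $p=\tfrac{d-1}{2}$ follows by the weak limit $p\downarrow\tfrac{d-1}{2}$, the Laplace transforms converging pointwise to a limit continuous at $0$. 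Degenerate $\sigma$ is handled by a further weak limit in $\sigma$, or by restricting to $\mathrm{range}(\sigma)$ together with a deterministic shift.

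\emph{Main obstacle.} The crux is Part (i): not the Riccati/parameter bookkeeping, but the bootstrap from ``$\Gamma(p,\omega;I_d)$ is a probability measure'' to ``an affine Markov process on a state space large enough to invoke the admissibility theorem genuinely exists'', and then extracting the bound $\rank\omega\le 2p+1$ — one degree weaker than the $\le 2p$ that a naive continuity/support heuristic would suggest, with the borderline $\rank\omega=2p+1$ left genuinely open. This gap is exactly the phenomenon the abstract refers to when it says that membership in $\Lambda_d$ is no longer sufficient. The remaining ingredients — identifying the affine parameters, verifying Bochner-type positivity at the boundary values of $p$, and the congruence/weak-limit manipulations — are routine.
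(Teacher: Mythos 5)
Your Part (ii) and the preliminary reductions (congruence action, convolution, the Gaussian model for half-integer shapes, weak limits in $p$ and $\sigma$) are essentially the paper's Lemma \ref{lemma: existence wishart dist} and Proposition \ref{proplemma}\ref{firstelemprop}, and are fine. Part (i), however, has two genuine gaps, one of which is a wrong mechanism rather than a missing detail.

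First, the Gindikin condition $p\in\Lambda_d$ does \emph{not} come from the admissibility classification of affine processes, contrary to what you assert. The admissibility/drift condition for affine diffusions on $S_d^+$ (Proposition \ref{prop: drift condition}\ref{prop: drift condition item 1}, i.e.\ $b\succeq(d-1)\alpha$) forces $p\geq\frac{d-1}{2}$ and nothing weaker; it does not produce the discrete values $\frac{1}{2},\dots,\frac{d-2}{2}$. So if your bootstrap from a single measure $\Gamma(p,\omega;I)$ to a genuine affine Markov process worked in general, it would prove $p\geq\frac{d-1}{2}$ for every existing non-central Wishart law, which is false (take $p=\frac12$, $\rank\omega\leq 1$). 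The paper obtains $p\in\Lambda_d$ by an entirely different and much softer route: exponential tilting and the automorphism action show that $\Gamma(p,t\omega;\sigma)$ exists for all $t>0$ (Proposition \ref{proplemma}\ref{proppoint 3a}); letting $t\downarrow 0$ and invoking L\'evy's continuity theorem yields the central law $\Gamma(p;\sigma)$, and then the Peddada--Richards characterization of the central case (Theorem \ref{th: mainproposition}) gives $p\in\Lambda_d$. Your parenthetical zonal-polynomial alternative is precisely the approach Peddada and Richards could only carry out for $\rank\omega\leq 1$; asserting nonnegativity of the generalized Pochhammer coefficients for higher rank is the hard unproved step, not a remark.

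Second, for the rank bound you correctly name the affine-process strategy, but the ``bootstrap'' you flag as the crux is exactly the content you do not supply, and as described (building the process from $\mu$ and its congruence orbit alone) it cannot be made to work: one needs $\Gamma(p,\omega;\sigma)$ to exist for \emph{all} $(\omega,\sigma)\in(S_d^+)^2$ before the kernels \eqref{eq: FLT Bru1} define transition probabilities from every starting point and Chapman--Kolmogorov can be verified. The paper achieves this by a convolution trick: assuming $\rank\omega_0>2p_0+1$ with $p_0<\frac{d-1}{2}$, convolve with $\Gamma(p_1,\omega_1;\sigma)$ where $2p_1=\rank\omega_1=d-\rank\omega_0$, so that $\omega^*=\omega_0+\omega_1$ has full rank; Proposition \ref{proplemma}\ref{proppoint 3b} (the exponential-family/automorphism argument) then upgrades full rank of $\omega^*$ to existence for all parameters, and the arithmetic $p^*=p_0+\frac{d-\rank\omega_0}{2}<\frac{d-1}{2}$ — which holds precisely because $\rank\omega_0>2p_0+1$ and fails at $\rank\omega_0=2p_0+1$ — is what contradicts the drift condition. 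This is where the $+1$ in $\rank\omega\leq 2p+1$ comes from; your appeal to ``the stratum attainable for that parameter'' does not produce it.
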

It should be noted that Theorem is not a full characterization of the existence of non-central Wishart distributions,
because it leaves open the question, whether distributions  $\Gamma(p,\omega;\sigma)$ exist with $p\in\{1/2,\dots,\frac{d-2}{2}\}$
and $\rank(\omega)=2p+1$. This is only an interesting question for $d\geq 3$, and $\rank(\omega)>1$
as the following two corollaries demonstrate. These are immediate conclusions of Theorem \ref{th: maintheorem}.
\begin{corollary}\label{corollary eaton}
Suppose that $\sigma$ is invertible and
$\rank(\omega)\leq 1$. The following are equivalent:
\begin{enumerate}
\item The right side of \eqref{FLT Mayerhofer Wishart} is the Laplace transform of a non-trivial probability measure
$\Gamma(p,\omega;\sigma)$ on $S_d^+$.
\item $p\in \Lambda_d$.
\end{enumerate}
\end{corollary}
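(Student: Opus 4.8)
The plan is to deduce both implications directly from Theorem \ref{th: maintheorem}; no new analytic input is required once that theorem is available.

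For the implication $(1)\Rightarrow(2)$ I would simply invoke part \ref{part1thmain} of Theorem \ref{th: maintheorem}: if $\sigma$ is invertible and the right-hand side of \eqref{FLT Mayerhofer Wishart} is the Laplace transform of a non-trivial probability measure on $S_d^+$, then in particular $p\in\Lambda_d$. (The accompanying bound $\rank(\omega)\le 2p+1$ is then automatic and is not needed here.)

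For the converse $(2)\Rightarrow(1)$ I would split according to which ``piece'' of the Gindikin ensemble $p$ lies in. If $p\ge\frac{d-1}{2}$, then condition (a) of the second part of Theorem \ref{th: maintheorem} applies and $\Gamma(p,\omega;\sigma)$ exists irrespective of $\omega$. Otherwise $p=\frac{j}{2}$ for some integer $j$ with $1\le j\le d-2$; then $p<\frac{d-1}{2}$, and since $\rank(\omega)\le 1\le j=2p$, condition (b) of the second part of Theorem \ref{th: maintheorem} applies, so again $\Gamma(p,\omega;\sigma)$ exists. These two cases exhaust $\Lambda_d$, which proves $(2)\Rightarrow(1)$.

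The only point that requires even a moment's thought — and the nearest thing to an obstacle — is the elementary observation that every element of the discrete part $\{j/2:j=1,\dots,d-2\}$ of $\Lambda_d$ satisfies $2p\ge 1$, which is precisely what allows any $\omega$ with $\rank(\omega)\le 1$ (the central case $\omega=0$ included) to be accommodated through condition (b). Apart from this bookkeeping, the corollary is a formal consequence of Theorem \ref{th: maintheorem}, the substantive work having been done in the proof of that theorem.
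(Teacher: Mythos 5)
Your proof is correct and is exactly the deduction the paper intends: the paper gives no separate argument for Corollary \ref{corollary eaton}, stating only that it is an immediate consequence of Theorem \ref{th: maintheorem}, and your case split ($p\ge\frac{d-1}{2}$ via (a); $p=j/2$ with $1\le j\le d-2$, hence $\rank(\omega)\le 1\le 2p$, via (b)) is the right bookkeeping. Nothing further is needed.
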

Another trivial consequence holds in low dimensions. Note that
$\Lambda_1=[0,\infty)$ and $\Lambda_2=[\frac{1}{2},\infty)$:
\begin{corollary}\label{corollarydim2}
Let $d\leq 2$, and suppose $\sigma$ to be invertible. The following
are equivalent:
\begin{enumerate}
\item The right side of \eqref{FLT Mayerhofer Wishart} is the Laplace transform of a non-trivial probability measure
$\Gamma(p,\omega;\sigma)$ on $S_d^+$.
\item $p\in \Lambda_d$.
\end{enumerate}
\end{corollary}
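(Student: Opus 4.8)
The plan is to deduce Corollary~\ref{corollarydim2} from Theorem~\ref{th: maintheorem} by exploiting that in dimensions $d\le 2$ the Gindikin ensemble degenerates to a half-line: $\Lambda_1=[0,\infty)$ and $\Lambda_2=[\tfrac12,\infty)$, so in both cases $\Lambda_d=[\tfrac{d-1}{2},\infty)$ has no isolated points, and moreover every $\omega\in S_d^+$ satisfies $\rank(\omega)\le d\le 2$. Consequently the ``ambiguous'' regime that Theorem~\ref{th: maintheorem} leaves open --- shape parameters in $\{\tfrac12,\dots,\tfrac{d-2}{2}\}$ --- is empty when $d\le 2$, and the two rank constraints appearing in the theorem are automatically satisfied once $p\ge\tfrac{d-1}{2}$, since then $2p+1\ge d\ge\rank(\omega)$. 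Thus the rank of $\omega$ will be irrelevant here and only the position of $p$ relative to $\tfrac{d-1}{2}$ will matter.

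The content of the statement concerns $p>0$, and I would settle this first. For the implication (i)$\Rightarrow$(ii): if the right side of \eqref{FLT Mayerhofer Wishart} is the Laplace transform of a non-trivial $\Gamma(p,\omega;\sigma)$ with $\sigma$ invertible, then the first part of Theorem~\ref{th: maintheorem} gives $p\in\Lambda_d$ immediately, the accompanying inequality $\rank(\omega)\le 2p+1$ being automatic and unused. For (ii)$\Rightarrow$(i): if $p\in\Lambda_d$ and $p>0$, then $\Lambda_d=[\tfrac{d-1}{2},\infty)$ forces $p\ge\tfrac{d-1}{2}$, so hypothesis (a) in the second part of Theorem~\ref{th: maintheorem} applies and produces the non-trivial probability measure $\Gamma(p,\omega;\sigma)$; for this direction invertibility of $\sigma$ is not even needed. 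This already yields the equivalence for all $p>0$, regardless of $\omega$.

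It remains to dispose of the boundary value $p=0$, where $0\in\Lambda_1$ but $0\notin\Lambda_2$, so that the two dimensions behave differently. Here the footnote to Theorem~\ref{th: maintheorem} identifies the trivial case ($\sigma$ invertible forces triviality precisely when $\omega=0$), and what is left is: for $d=1$, to confirm that $e^{-\tr(u(I+\sigma u)^{-1}\omega)}$ really is the Laplace transform of a probability measure (non-trivial when $\omega\ne 0$), which holds because $u\mapsto\tr\bigl(u(I+\sigma u)^{-1}\omega\bigr)=\tfrac{\omega}{\sigma}\bigl(1-(1+\sigma u)^{-1}\bigr)$ is a Bernstein function, so its negative exponential is completely monotone with value $1$ at the origin; and for $d=2$, to rule out a non-trivial $\Gamma(0,\omega;\sigma)$ when $\omega\ne 0$, i.e.\ to show $u\mapsto e^{-\tr(u(I+\sigma u)^{-1}\omega)}$ is not completely monotone on $S_2^+$. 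This last step is the only one that is not a one-line specialization of the main theorem, and it is where I expect to spend genuine effort; a natural route is a direct computation in $S_2^+$, or a limiting argument using that every $p\in(0,\tfrac12)$ lies outside $\Lambda_2$, so that by the first part of Theorem~\ref{th: maintheorem} no non-trivial $\Gamma(p,\omega;\sigma)$ exists for such $p$. Everything else is an immediate specialization of Theorem~\ref{th: maintheorem}.
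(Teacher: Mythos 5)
Your main argument is exactly the paper's: the corollary is stated there as an \emph{immediate} consequence of Theorem~\ref{th: maintheorem}, precisely because for $d\le 2$ one has $\Lambda_d=[\tfrac{d-1}{2},\infty)$ and $\rank(\omega)\le d\le 2p+1$, so the rank conditions are vacuous and the two parts of the theorem collapse to the stated equivalence. One caveat on your excursion into $p=0$: Theorem~\ref{th: maintheorem} is stated for $p>0$, and the corollary should be read with the same restriction (indeed at $p=0$, $\omega=0$, $d=1$ the equivalence would literally fail, since $0\in\Lambda_1$ but $\Gamma(0,0;\sigma)=\delta_0$ is trivial), so this case is outside what the paper claims or proves. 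Moreover, your proposed route for ruling out a non-trivial $\Gamma(0,\omega;\sigma)$ on $S_2^+$ by ``a limiting argument using that every $p\in(0,\tfrac12)$ lies outside $\Lambda_2$'' does not work as stated: non-existence for nearby positive $p$ gives no information about $p=0$ (Laplace transforms of the hypothetical $p=0$ measure are not limits of the non-existent ones, and convolving $\Gamma(0,\omega;\sigma)$ with existing central laws only produces parameters $p\ge\tfrac12$, yielding no contradiction). If you want to settle that boundary case you would need a genuinely new argument; for the corollary as intended, your first two paragraphs already suffice and coincide with the paper.
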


We slightly adapt the notation of the recent article by
Letac and Massam \cite{Letac08}, and we are recollecting a
number of fundamental statements thereof below (see section \ref{subsec
1}) especially what concerns basic properties of non-central Wishart
distributions. Concerning their main statement as well as that of
\cite{PeddadaRichards91}, the following important remark is due:
\begin{remark}\rm
\begin{enumerate}
\item \cite[Proposition 2.3]{Letac08} claims that  $\Lambda_d$ fully characterizes the existence of non-central Wishart distributions.
But this (paradoxically) allows the construction of Markovian Feller
semigroups on $S_d^+$ which are non-positive, a mere impossibility.
That's how we obtain the additional necessary conditions on the rank
of $\omega$ in dependence of $p$, which suggests that the
characterization of \cite{Letac08} is wrong. On the other hand,
it is obvious that the existence proof of \cite[Proposition 2.3, see also Proposition 2.1 and the subsequent paragraph]{Letac08} is
incomplete, as for $p<\frac{d-1}{2}$ and $\rank(\omega)>2p$, the
existence of non-central Wishart distributions
    $\Gamma(p,\omega;\sigma)$ is not shown there.
\item \cite[Theorem 1]{PeddadaRichards91} prove the necessity of
$p\in\Lambda_d$ (which had been a conjecture by M.L. Eaton) under the premise that $\rank(\omega)=1$. However,
the method of \cite{PeddadaRichards91}, which involves the theory of
zonal polynomials, relies on the non-negativity of the so-called
generalized binomial coefficients which may be ''difficult to prove``
in the case that $\rank(\omega)>1$, see their concluding remark in
    \cite[Section 4]{PeddadaRichards91}. In contrast, the present
    paper shows with a much simpler argument that $p\in\Lambda_d$,
    for {\it all} non-central Wishart distributions with
    nondegenerate scale parameter (see the first part of the proof
    of Theorem \ref{th: maintheorem} \ref{part1thmain}.
    \end{enumerate}
\end{remark}
Our method for approaching the existence issue is new. We shall see
that $p\in\Lambda_d$ can be proved by utilizing the situation in the
central case (restated as Theorem \ref{th: mainproposition}) and
L\'evy's continuity theorem as well as a number of elementary facts
such as (1) the behaviour of $\Gamma(p,\omega;\sigma)$ under the the
action of the linear automorphism group of $S_d^+$ and (2) the
characterizing property of the natural exponential family associated
with $\Gamma(p,\omega;\sigma)$, see Proposition \ref{proplemma}.
Concerning the rank condition, we proceed with an indirect argument:
Assuming by contradiction that $p<\frac{d-1}{2}$ and $\rank(\omega)>
2p+1$ allows  the existence of affine Feller diffusions $X$
supported on $S_d^+$ (see \cite{CFMT}) whose transition laws are
non-centrally Wishart distributed. The contradiction
$p\geq\frac{d-1}{2}$ is derived by observing that $X$ violates a
(geometric relevant) drift condition, as established in \cite{CFMT}.
The latter is a consequence of the fact that the infinitesimal
generator of a Markovian Feller semigroup satisfies the strong
maximum principle, see \cite[section 4.4]{CFMT}.
\subsection{Program of the paper}
In section  \ref{sec: prelim} we deliver notation and recall known
facts about the existence of non-central Wishart laws (subsection
\ref{subsec 1}) and Wishart processes on $S_d^+$ (subsection
\ref{subsec 2}). The latter section uses a convenient notation for
the Laplace transform, such that the distribution of Wishart
processes can be easily read off from the characteristic exponents
of the (affine) process, and which could be easily turned into an
existence proof alternative to the one of \cite{CFMT}. The
presentation of section \ref{subsec 2} is instructive, and is of
relevance for the proof of Theorem \ref{th: maintheorem}. Also, for
the sake of completeness, we restate the characterization of the
central Wishart distributions in terms of the Gindikin ensemble in
subsection \ref{subsec 3} and state trivial conclusions when
$\sigma$ is degenerate (characterization of existence and infinite
divisibility). Section \ref{sec: proof} presents a proof of Theorem
\ref{th: maintheorem}. In Appendix \ref{App} the relation of our definition
of non-central Wishart distributions to others in the literature is given.
\section{Notation and preliminary results}\label{sec: prelim}
\begin{notation}\rm
Throughout the present article the following notation is relevant:
\begin{itemize}
\item $\mathbb R_+$ is the non-negative real line, and $\mathbb R_{++}$ is its interior,
 \item $M_d$ denotes the set of real $d\times d$ matrices, and $S_d$ all symmetric ones therein.
 \item $I$ is the unit $d\times d$ matrix.
 \item $S_d^+$ is the cone of symmetric positive semi-definite matrices, and $S_d^{++}$
 denotes its interior, the symmetric positive definite matrices. We denote its boundary $S_d^+\setminus S_d^{++}$ by $\partial S_d^+$.
\item $\tr(A)$ is the trace of a matrix $A\in M_d$, which introduces a scalar product on
 $S_d$ via  $\langle x,y\rangle:=\tr(xy)$ for $x,y\in S_d$.
\item For $k=1,2,\dots d-1$, $D_k\subseteq \partial S_d^{+}$ determines the (non-convex) cones of $d\times d$ matrices
of rank less or equals $k$. We note that the dual cone of $D_k$ equals $D_k^\star=S_d^+$, hence the Laplace transform of
a finite measure $\mu(d\xi)$ supported on $D_k$ is defined by
\[
\mathcal L(\mu)(u):=\int_{D_k}e^{-\langle u,\xi\rangle}\mu(d\xi),\quad u\in S_d^+.
\]
%\item Whenever $\sigma$ is the unit matrix we write $\Gamma(p,\omega)$
%instead of $\Gamma(p,\omega;\sigma)$.
\end{itemize}
\end{notation}
\subsection{Facts on non-central Wishart laws}\label{subsec 1}
First we recall the existence and basic properties of non-central Wishart distributions:
\begin{lemma}\label{lemma: existence wishart dist}
Let $p\in \Lambda_d$, $\sigma\in S_d^+$ and $\omega\in S_d^+$. We have:
\begin{enumerate}
\item \label{ex wis dist minilemma} Suppose $w=mm^\top$ for $m\in\mathbb R^d$ and set $\Sigma:=\sigma/2$. If $Y\sim \mathcal N(m, \Sigma)$,
then  $X:=YY^\top\sim \Gamma(1/2,w;\sigma )$ is supported on $D_{1}$.
\item \label{ex wis dist 0} If $p<\frac{d-1}{2}$ and $\rank(\omega)\leq 2p$, then the right side of \eqref{FLT Mayerhofer Wishart} is the Laplace transform of a probability measure supported on $D_{2p}$.
\item \label{ex wis dist 1} If $p\geq \frac{d-1}{2}$, then the right side of \eqref{FLT Mayerhofer Wishart} is the Laplace transform of a probability measure $\Gamma(p, \omega;\sigma)$ on $S_d^+$.
\item \label{ex wis dist 2} In particular, if $p>\frac{d-1}{2}$ and if $\sigma$ is invertible, then the density of $\Gamma(p, \omega;\sigma)$ exists\footnote{For a detailed exposition of the densities, which involves the zonal polynomials, we refer to \cite[eq.~p.~1400]{Letac08}} and we denote it by $F(p, \omega,\sigma,\xi)$.
\end{enumerate}
\end{lemma}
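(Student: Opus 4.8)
The plan is to build the four parts on top of one another, starting from the elementary Gaussian computation and bootstrapping to the general case by a convolution/limit argument. For part \eqref{ex wis dist minilemma}, I would simply compute the Laplace transform of $X=YY^\top$ where $Y\sim\mathcal N(m,\Sigma)$ with $\Sigma=\sigma/2$. For $u\in S_d^+$, $\mathbb E[e^{-\langle u,YY^\top\rangle}]=\mathbb E[e^{-Y^\top u Y}]$, and a standard Gaussian integral over $\mathbb R^d$ gives $\det(I+2\Sigma u)^{-1/2}\exp(-m^\top u(I+2\Sigma u)^{-1}m)$. Substituting $2\Sigma=\sigma$ and $w=mm^\top$, and using $m^\top u(I+\sigma u)^{-1}m=\tr(u(I+\sigma u)^{-1}w)$, this is exactly the right-hand side of \eqref{FLT Mayerhofer Wishart} with $p=1/2$. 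Since $Y\in\mathbb R^d$, $X=YY^\top$ has rank at most $1$, so it is supported on $D_1$; this identifies the law as $\Gamma(1/2,w;\sigma)$.

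For part \eqref{ex wis dist 2} I would proceed next, since it is the cleanest of the remaining three: when $\sigma$ is invertible and $p>\frac{d-1}{2}$, existence of a density is the classical fact about non-central Wishart laws, and I would simply cite \cite{Letac08} (as the statement already does) together with the known absolute continuity in the central case for $p>\frac{d-1}{2}$, noting that the non-centrality only multiplies the central density by an entire function (the confluent hypergeometric series of matrix argument), hence does not destroy absolute continuity. Part \eqref{ex wis dist 1} for $p\geq\frac{d-1}{2}$ then follows by taking $\sigma$ possibly degenerate as a limit of invertible scale parameters $\sigma+\varepsilon I\downarrow\sigma$: the right-hand side of \eqref{FLT Mayerhofer Wishart} is continuous in $\sigma$ on $S_d^+$ and continuous at $u=0$ uniformly, so L\'evy's continuity theorem (for Laplace transforms on the cone) promotes the family of probability measures $\Gamma(p,\omega;\sigma+\varepsilon I)$ to a weak limit, which is a probability measure on $S_d^+$ with the desired Laplace transform. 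The boundary values $p=\frac{d-1}{2}$ and the case $\rank(\omega)=0$ are covered here as well, the latter being just the central Gindikin result restated as Theorem \ref{th: mainproposition}.

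Part \eqref{ex wis dist 0} is the one I expect to be the main obstacle, because here $p<\frac{d-1}{2}$ is a \emph{half-integer} $p=j/2$ outside the continuous part of $\Lambda_d$, so no density exists and one must produce a measure genuinely supported on the low-rank set $D_{2p}=D_j$. My plan is to realize $\Gamma(j/2,\omega;\sigma)$ as the law of $\sum_{i=1}^{j}Y_iY_i^\top$ where the $Y_i\sim\mathcal N(m_i,\sigma/2)$ are independent and the vectors $m_i$ are chosen so that $\sum_i m_im_i^\top=\omega$; this requires exactly $\rank(\omega)\leq j=2p$, which is the hypothesis. By part \eqref{ex wis dist minilemma} each summand has Laplace transform $\det(I+\sigma u)^{-1/2}\exp(-\tr(u(I+\sigma u)^{-1}m_im_i^\top))$, and independence multiplies these, giving $\det(I+\sigma u)^{-j/2}\exp(-\tr(u(I+\sigma u)^{-1}\sum_i m_im_i^\top))$, which is \eqref{FLT Mayerhofer Wishart} with $p=j/2$ and non-centrality $\omega$. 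Each $Y_iY_i^\top$ has rank $\leq 1$, so the sum has rank $\leq j$, i.e. the law is supported on $D_{2p}$. The only delicate points are: first, that $\sigma$ may be degenerate, handled again by the $\sigma+\varepsilon I\downarrow\sigma$ limiting argument as in part \eqref{ex wis dist 1}, checking that the support constraint $D_{2p}$ is preserved under the weak limit (it is, since $D_{2p}$ is closed); and second, confirming that for $p<\frac{d-1}{2}$ only the half-integers in $\Lambda_d$ need to be treated, so that $2p$ is automatically a nonnegative integer $\leq d-2$ and the construction above is available.
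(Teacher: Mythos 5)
Your proposal is correct and follows essentially the same route as the paper: part (i) is the rank-one Gaussian computation (which the paper simply cites from Letac--Massam), part (ii) is the sum of $2p$ independent rank-one pieces with $\sum_i m_im_i^\top=\omega$, and parts (iii)--(iv) use the regularization $\sigma+\varepsilon I$ together with L\'evy's continuity theorem and the known density from \cite{Letac08}. The only difference is that you carry out the Gaussian integral explicitly and are slightly more careful about the closedness of $D_{2p}$ under weak limits, which is harmless.
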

\begin{proof}
Proof of \ref{ex wis dist minilemma} is provided in \cite[Proposition 3.2]{Letac08}.
\ref{ex wis dist 0}: This follows from \ref{ex wis dist minilemma}, by taking
$2p$ independent normal random variables on $\mathbb R^d$ with distribution $\mathcal N(m_i,\Sigma)$ where
$\Sigma=\sigma/2$ and $\omega:=m_1m_1^\top+\cdots+m_{2p} m_{2p}^\top$.

Note that if $\sigma\in S_d^{++}$, our definition of non-central Wishart distribution is related to the one of
\cite{Letac08} in that $\Gamma(p,\omega;\sigma)=\gamma(p,\sigma^{-1}\omega\sigma^{-1};\sigma)$, the latter being
called ''general non-central Wishart distribution`` in \cite{Letac08} (see Appendix \ref{App} for more detailed information). Hence statement \ref{ex wis dist 2} is  a
consequence of \cite[p.\ 1400]{Letac08}.

Now for each $\varepsilon>0$ we regularize $\sigma$ and $a$ by setting
\[
\sigma_\varepsilon:=\sigma+\varepsilon I, \quad a_\varepsilon:=(\sigma+\varepsilon I)^{-1}\omega (\sigma+\varepsilon I)^{-1}.
\]
Then for each $\varepsilon>0$, we pick $X_\varepsilon$, an $S_d^+$ valued random variable according to
\cite[Proposition 2.3]{Letac08} such that
\[
X_\varepsilon\sim\Gamma(p,\omega;\sigma_\varepsilon) (=\gamma(p,a_\varepsilon;\sigma_\varepsilon)).
 \]
Letting $\varepsilon\rightarrow 0$ and using L\'evy's continuity theorem,
we infer that $X_\varepsilon$ converges in distribution to
some random variable $X\sim\Gamma(p,\omega;\sigma)$. This settles part \ref{ex wis dist 1} and
\ref{ex wis dist 2}.

\end{proof}
\subsection{On the Fourier-Laplace transform of Wishart
processes}\label{subsec 2} A stochastically continuous Markov
process $(X,\mathbb P_x)_{x\in S_d^+ }$ on $S_d^+$ is called affine, if
its Laplace transform is exponentially affine in the state-variable
(see \cite{CFMT}). That is, for all $(t,x,u)\in\mathbb R_+\times
(S_d^+)^2$
\begin{equation}\label{eq char exp}
\mathbb E[e^{-\langle u,X_t\rangle}\mid X_0=x]=e^{-\phi(t,u)-\langle \psi(t,u),x\rangle}, \quad u\in S_d^+
\end{equation}
holds, where the so-called characteristic exponents $\phi$ and $\psi$ satisfy a system of generalized Riccati equations,
\begin{align}
\dot{\phi}(t,u)&=F(\psi(t,u)),\quad \phi(0,u)=0,\\
\dot{\psi}(t,u)&=R(\psi(t,u)),\quad \psi(0,u)=u,
\end{align}
and $F,\, R$ are of a specific L\'evy-Khintchine form, which is
particularly simple in the case of Wishart processes (which are pure
diffusions; for the original definition in terms of stochastic differential
equations and particular solutions of these SDEs, see \cite{bru}):
\begin{definition}\rm
An affine process $X=(X,\mathbb P_x)_{x\in S_d^+ }$ is a Wishart process on $S_d^+$ with parameter $(p\geq 0,\alpha\in S_d^+,\beta\in M_d)$, if its characteristic exponents $(\phi,\psi)$ satisfy the following Riccati equations:
\begin{align}\label{eq: phi}
\dot{\phi}(t,u)&=2p\,\langle
\alpha,\psi(t,u)\rangle,\quad\phi(0,u)=0\\\label{eq: psi}
\dot{\psi}(t,u)&=-2\psi(t,u)\,\alpha
\psi(t,u)+\psi(t,u)\beta+\beta^\top\psi(t,u),\quad\psi(0,u)=u.
\end{align}
Using notation and language from \cite[Definition 2.3 and the discussion in section 2.1]{CFMT}, the parameters satisfy the following
 \begin{itemize}
\item $\alpha$ equals the diffusion coefficient of $X$,
\item $b=2p\, \alpha$ equals the constant drift of $X$, and
\item $B(x)=\beta x+x\beta^\top$ equals the linear drift. Note that the drift enters \eqref{eq: psi} as its transpose $B^\top(u)=\beta^\top u+u\beta$.
\end{itemize}
\end{definition}
By \cite[Theorem 2.4]{CFMT} we have:
\begin{proposition}\label{prop: drift condition}
Let $\alpha\in S_d^+$ and $p\geq 0$.
\begin{enumerate}
\item If $p\geq\frac{d-1}{2}$, then for each $\beta\in M_d$, there exists a Wishart process on $S_d^+$ with parameters $(p,\alpha, \beta)$.
\item \label{prop: drift condition item 1} Conversely, let $X$ be a Wishart process with parameters $(p,\alpha, \beta)$. Then $p\geq\frac{d-1}{2}$.
\end{enumerate}
\end{proposition}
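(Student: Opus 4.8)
The plan is to identify Proposition~\ref{prop: drift condition} with \cite[Theorem~2.4]{CFMT} specialised to the Wishart constant drift $b=2p\,\alpha$, and to reconstruct the two implications. In the language of \cite{CFMT} a Wishart process with parameter $(p,\alpha,\beta)$ is the affine diffusion on $S_d^+$ with diffusion coefficient $\alpha$, constant drift $b=2p\,\alpha$ and linear drift $B(x)=\beta x+x\beta^\top$; \cite[Theorem~2.4]{CFMT} characterises existence of such a process by admissibility of the parameters, and among the admissibility conditions the only one constraining $p$ (given $\alpha,\beta$) is $b-(d-1)\alpha\in S_d^+$, which for $b=2p\,\alpha$ reads $(2p-(d-1))\alpha\in S_d^+$, i.e.\ $p\ge\frac{d-1}{2}$ when $\alpha\neq 0$. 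Below I sketch the two directions.

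For part~(i), I would first solve the Riccati system \eqref{eq: phi}--\eqref{eq: psi} globally with $S_d^+$-values. The point is that the vector field $R(\psi)=-2\psi\alpha\psi+\psi\beta+\beta^\top\psi$ of \eqref{eq: psi} is inward pointing along $\partial S_d^+$: if $\psi\in\partial S_d^+$ and $\psi v=0$ for some $v$, then $v^\top R(\psi)v=0$, so $S_d^+$ is forward invariant; and the dissipative term $-2\psi\alpha\psi$ together with the linear growth of the remaining terms prevents a finite-time blow-up. Hence $\psi(t,u)\in S_d^+$ and $\phi(t,u)=2p\int_0^t\langle\alpha,\psi(s,u)\rangle\,ds\ge 0$ are well defined on $\mathbb R_+\times S_d^+$. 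It then remains to show that $(t,x,u)\mapsto e^{-\phi(t,u)-\langle\psi(t,u),x\rangle}$ is really the Laplace transform of the transition kernel of a Feller process on $S_d^+$, and this is where $p\ge\frac{d-1}{2}$ is used: the associated second-order operator $\mathcal A$, whose drift part is $x\mapsto 2p\,\alpha+\beta x+x\beta^\top$ and whose diffusion part is the quadratic operator of a Wishart diffusion with diffusion coefficient $\alpha$, satisfies the positive maximum principle on $C_0(S_d^+)$ precisely because the constant drift $b=2p\,\alpha$ dominates $(d-1)\alpha$ — the matrix analogue of the Feller condition for squared Bessel processes. By Hille--Yosida--Ray, $\mathcal A$ then generates a conservative Feller semigroup, and matching with the Kolmogorov backward equation identifies its Laplace transform with $e^{-\phi-\langle\psi,\cdot\rangle}$. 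I would cite \cite{CFMT} for this analytic core.

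For part~(ii), I would argue through the maximum principle at the boundary. If $X$ is a Wishart process with parameters $(p,\alpha,\beta)$, and $\alpha\neq 0$ (the case $\alpha=0$ being degenerate), then its transition semigroup is Markovian and Feller, hence its generator satisfies the positive maximum principle (\cite[Section~4.4]{CFMT}). I would fix a unit vector $v$ and a point $x_0\in\partial S_d^+$ of rank $d-1$ with $\ker x_0=\mathbb R v$, and feed into this principle a test function that is nonnegative on $S_d^+$, vanishes exactly on $\{x:v\in\ker x\}$, and is smooth near $x_0$ — for instance the Schur complement of $x$ along the splitting $\mathbb R^d=\mathbb R v\oplus v^\perp$. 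The naive choice $f(x)=v^\top x v$ has vanishing Hessian and yields only $p\ge 0$; the Schur complement, in contrast, carries nontrivial curvature in the $d-1$ non-degenerate directions of $x_0$, so its diffusion term contributes a multiple $-(d-1)\,v^\top\alpha v$ against the drift contribution $2p\,v^\top\alpha v$, and the positive maximum principle forces $(2p-(d-1))\,v^\top\alpha v\ge 0$, i.e.\ $p\ge\frac{d-1}{2}$. Equivalently, the eigenvalue of $X_t$ transverse to the rank-$(d-1)$ face behaves near the boundary like a squared Bessel process of ``dimension'' $2p-(d-1)$, which has to be nonnegative for $X$ to stay in $S_d^+$.

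The hard part, in both directions, is the boundary behaviour. In (i) it is the verification of the positive maximum principle for $\mathcal A$ — equivalently, well-posedness of the associated martingale problem — under the sharp drift condition $b\succeq(d-1)\alpha$; in (ii) it is isolating a test function for which the positive maximum principle collapses to exactly $2p\ge d-1$ rather than to the vacuous $p\ge 0$. Both are carried out in \cite[Theorem~2.4 and Section~4.4]{CFMT}, which I would ultimately invoke.
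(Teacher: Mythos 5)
Your proposal is correct and follows essentially the same route as the paper, which states Proposition \ref{prop: drift condition} as a direct consequence of \cite[Theorem 2.4]{CFMT} via the identification $b=2p\,\alpha$ and the admissibility condition $b-(d-1)\alpha\in S_d^+$; your additional sketch of the maximum-principle argument at rank-$(d-1)$ boundary points is a faithful reconstruction of what that citation contains. The only caveat is that the necessity direction genuinely requires $\alpha\neq 0$ (which you note), a hypothesis the paper's statement leaves implicit.
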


We further recall the established fact \cite[Theorem 2.7, equation (2.22)]{CFMT} that for each $x\in S_d^+$, $(X,\mathbb P_x)$ can be realized as (a weak) solution of the stochastic differential equation \begin{equation}\label{wishart full class}
dX_t=\sqrt {X_t} dB_t Q+Q^\top dB_t^\top \sqrt {X_t}+(2p\,Q^\top Q+\beta X_t+X_t\beta^\top )dt
\end{equation}
subject to $\quad X_0=x\in S_d^+$, for any $Q\in M_d$ which satisfies $Q^\top Q=\alpha$. Here $B$ is a $d\times d$ standard Brownian motion, and $\sqrt X$ denotes the unique matrix square root on the space of positive semi-definite matrices.

In the following we denote by $\omega^{\beta}_t$ the flow of the vector field $\beta x+x\beta^\top$, that is,
\[
\omega^{\beta}:\,\, \mathbb R\times S_d^+\rightarrow S_d^+,\quad\omega^{\beta}_t(x):=e^{\beta t}x e^{\beta^\top t}.
\]
Its integral $\sigma^{\beta}_t: S_d^+\rightarrow S_d^+$ for $t\geq 0$ is denoted by
\[
\sigma^{\beta}:\,\, \mathbb R_+\times S_d^+\rightarrow S_d^+,\quad\sigma^{\beta}_t(x)=2\int_0^t \omega^{\beta}_s(x) ds.
\]
\begin{proposition}\label{prop wish proc}
Let $(X,\mathbb P_x)_{x\in S_d^+}$ be a Wishart process with parameter $(p,\alpha,\beta)$. Then the characteristic exponents $\phi,\,\psi$ take the form
\begin{align}\label{formula: Wishartphi}
\phi(t,u)&=p\log\det \left(I+u
\sigma_t^\beta(\alpha)\right),\\\label{formula: Wishartpsi}
\psi(t,u)&=e^{\beta^\top
t}\left(u^{-1}+\sigma_t^\beta(\alpha)\right)^{-1}e^{\beta t}.
\end{align}
Consequently, the Fourier-Laplace transform of $X$ is given by
\begin{equation}\label{eq: FLT Bru}
\mathbb E_x[e^{-\langle z,
X_t\rangle}]=\left(\det(I+\sigma^{\beta}_t(\alpha)z) \right)^{-p}
e^{-\tr\left(z\left(I+\sigma^{\beta}_t(\alpha)
z\right)^{-1}\,\omega^{\beta}_t(x)\right)},
\end{equation}
for all $z\in S_d^++i S_d$.
\end{proposition}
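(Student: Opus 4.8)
The plan is to solve the Riccati system \eqref{eq: phi}--\eqref{eq: psi} explicitly and then read off \eqref{eq: FLT Bru} from \eqref{eq char exp}. The $\psi$-equation \eqref{eq: psi} is the crucial one, and the idea is to remove the linear drift term by a time-dependent change of variables. First I would set $\chi(t,u):=e^{-\beta^\top t}\psi(t,u)e^{-\beta t}$; differentiating and using \eqref{eq: psi}, the two linear terms $\psi\beta+\beta^\top\psi$ cancel against the derivatives of the conjugating exponentials, leaving the matrix Bernoulli-type equation $\dot\chi=-2\,e^{\beta^\top t}(e^{\beta t}\chi e^{\beta^\top t})\alpha(e^{\beta t}\chi e^{\beta^\top t})e^{\beta t}$ — not yet separable. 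The standard trick for such quadratic equations is to pass to the inverse: put $\eta(t,u):=\chi(t,u)^{-1}=e^{\beta t}\psi(t,u)^{-1}e^{\beta^\top t}$ (note $\psi$ stays invertible for small $t$ since $\psi(0,u)=u$, and by continuation on the whole half-line). Using $\dot{(\eta^{-1})}=-\eta^{-1}\dot\eta\,\eta^{-1}$ one gets the \emph{linear} equation $\dot\eta=2\,e^{\beta t}\alpha e^{\beta^\top t}=\dot\sigma^\beta_t(\alpha)$, with $\eta(0,u)=u^{-1}$. Hence $\eta(t,u)=u^{-1}+\sigma^\beta_t(\alpha)$, which upon unwinding the substitutions yields \eqref{formula: Wishartpsi}: $\psi(t,u)=e^{\beta^\top t}\bigl(u^{-1}+\sigma^\beta_t(\alpha)\bigr)^{-1}e^{\beta t}$.

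For $\phi$, I would integrate \eqref{eq: phi}: $\phi(t,u)=2p\int_0^t\langle\alpha,\psi(s,u)\rangle\,ds$. Substituting \eqref{formula: Wishartpsi} and using cyclicity of the trace, $\langle\alpha,\psi(s,u)\rangle=\tr\bigl(e^{\beta s}\alpha e^{\beta^\top s}(u^{-1}+\sigma^\beta_s(\alpha))^{-1}\bigr)=\tfrac12\tr\bigl(\dot\sigma^\beta_s(\alpha)\,\eta(s,u)^{-1}\bigr)$. Since $\dot\sigma^\beta_s(\alpha)=\dot\eta(s,u)$, the integrand is $\tfrac12\tr(\dot\eta\,\eta^{-1})=\tfrac12\frac{d}{ds}\log\det\eta(s,u)$, so $\phi(t,u)=p\log\det\eta(t,u)-p\log\det(u^{-1})=p\log\det\bigl(u(u^{-1}+\sigma^\beta_t(\alpha))\bigr)=p\log\det\bigl(I+u\,\sigma^\beta_t(\alpha)\bigr)$, which is \eqref{formula: Wishartphi}.

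It then remains to assemble \eqref{eq: FLT Bru}. Plugging \eqref{formula: Wishartphi}--\eqref{formula: Wishartpsi} into \eqref{eq char exp} gives $\mathbb E_x[e^{-\langle u,X_t\rangle}]=\det(I+u\sigma^\beta_t(\alpha))^{-p}\exp(-\langle e^{\beta^\top t}(u^{-1}+\sigma^\beta_t(\alpha))^{-1}e^{\beta t},x\rangle)$. Using $\langle e^{\beta^\top t}M e^{\beta t},x\rangle=\tr(M\,e^{\beta t}xe^{\beta^\top t})=\tr(M\,\omega^\beta_t(x))$ and the algebraic identity $(u^{-1}+\sigma^\beta_t(\alpha))^{-1}=u(I+\sigma^\beta_t(\alpha)u)^{-1}=(I+u\sigma^\beta_t(\alpha))^{-1}u$, one rewrites the exponent as $-\tr(u(I+\sigma^\beta_t(\alpha)u)^{-1}\omega^\beta_t(x))$, and similarly $\det(I+u\sigma^\beta_t(\alpha))=\det(I+\sigma^\beta_t(\alpha)u)$; this is exactly \eqref{eq: FLT Bru} for real $z=u\in S_d^+$. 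Finally, both sides are analytic in $z$ on the tube $S_d^{++}+iS_d$ and continuous up to $S_d^++iS_d$, so the identity extends from $S_d^{++}$ to all $z\in S_d^++iS_d$ by analytic continuation. The main technical point to watch is the invertibility of $\psi(t,u)$ (and of $u^{-1}+\sigma^\beta_t(\alpha)$) used to justify the substitutions and the $\log\det$ manipulations; for $u\in S_d^{++}$ this holds on all of $\mathbb R_+$ because $\sigma^\beta_t(\alpha)\in S_d^+$, and the general $u\in S_d^+$ case follows by continuity, after which the extension to complex $z$ is routine.
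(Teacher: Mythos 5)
Your argument is correct and follows essentially the same route as the paper: solve the Riccati system in closed form (the paper cites precisely the two rules you use, namely $\tfrac{d}{dt}a^{-1}(t)=-a^{-1}(t)\dot a(t)a^{-1}(t)$ and $\tfrac{d}{dt}\log\det a(t)=\tr(a^{-1}(t)\dot a(t))$), then extend to the tube $S_d^++iS_d$ by analytic continuation and convert via the identity $u(I+\theta u)^{-1}=(I+u\theta)^{-1}u$; your conjugation-plus-inversion step merely makes explicit how the Riccati equation is actually solved. The only blemish is a typographical slip in the intermediate equation for $\dot\chi$, which should read $\dot\chi=-2\,\chi\,e^{\beta t}\alpha e^{\beta^\top t}\chi$ --- this is the form that actually yields $\dot\eta=\dot\sigma^\beta_t(\alpha)$ as you correctly conclude.
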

\begin{proof}
We first solve the generalized Riccati equations
\eqref{eq: phi}--\eqref{eq: psi} for initial data $u\in S_d^+$.
Formula \eqref{formula: Wishartpsi} for $\psi$ follows from the fact that
$\frac{d}{dt}a^{-1}(t)=-a^{-1}(t) \frac{d}{dt}a(t)a^{-1}(t)$, see
\cite[Proposition III.4.2 (ii)]{Faraut}. Formula \eqref{formula: Wishartphi} follows by some elementary algebraic manipulations using the rule $\frac{d}{dt}\log(\det(a(t))=\tr(a^{-1}(t)\frac{d}{dt}a(t))$, see  \cite[Proposition II.3.3 (i)]{Faraut}.

Concerning the Fourier-Laplace transform \eqref{eq: FLT Bru}, we infer directly from their closed-form solutions
\eqref{formula: Wishartphi}--\eqref{formula: Wishartpsi}
that $\phi(t,u)$ and $\psi(t,u)$ allow for analytic extensions to the complex tube $S_d^+ +i S_d$, which we denote by $\phi(t,z)$ and $\psi(t,z)$. Hence  using analytic continuation, it can be seen that the Fourier-Laplace transform of $X$ is given by
\begin{equation}\label{eq: FLT Bru1}
\mathbb E_x[e^{-\langle z,
X_t\rangle}]=\left(\det(I+z\sigma^{\beta}_t(\alpha))\right)^{-p}
e^{-\tr\left(\left(I+z\sigma^{\beta}_t(\alpha)\right)^{-1}z
\omega^{\beta}_t(x)\right)},
\end{equation}
In order to obtain \eqref{eq: FLT Bru} from equation \eqref{eq: FLT Bru1} it suffices to observe that
for all $u,\,\theta\in M_d$ the following identity
\begin{equation}\label{lemming eq}
u(I+\theta u)^{-1}=(I+u\theta)^{-1}u
\end{equation}
holds, whenever either of both sides is well defined.
\end{proof}
Combining Lemma \ref{lemma: existence wishart dist} and Proposition \ref{prop wish proc}, we obtain the following result concerning Wishart transition kernels and -densities:
\begin{proposition}\label{prop: wishart are wishart}
Let $(X,\mathbb P_x)_{x\in S_d^+}$ be a Wishart process with parameters $(p,\alpha,\beta)$. Suppose further that the diffusion parameter $\alpha\neq 0$. Then for each $(t,x)\in \mathbb R_{++}\times S_d^+$  $X_t^x\sim\Gamma(p,\omega^{\beta}_t(x);\sigma^{\beta}_t(\alpha))$. If $\alpha\in S_d^{++}$, $p>\frac{d-1}{2}$ and $t>0$, then for all $x\in S_d^+$, $X_t^x$ has a Lebesgue density
\begin{equation*}
f_t(x,\xi)=F(p,\omega^\beta_t(x),\sigma^\beta_t(\alpha), \xi).
\end{equation*}
\end{proposition}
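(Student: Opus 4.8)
The plan is to obtain both assertions by simply matching the Fourier--Laplace transform furnished by Proposition~\ref{prop wish proc} against the defining Laplace transform \eqref{FLT Mayerhofer Wishart}. First I would restrict identity \eqref{eq: FLT Bru} to real arguments $z=u\in S_d^+$: since $(X,\mathbb P_x)$ is by hypothesis a Wishart process with parameters $(p,\alpha,\beta)$, Proposition~\ref{prop wish proc} applies verbatim and yields, for every $(t,x)\in\mathbb R_{++}\times S_d^+$,
\[
\mathbb E_x[e^{-\langle u,X_t\rangle}]=\bigl(\det(I+\sigma^\beta_t(\alpha)u)\bigr)^{-p}\,e^{-\tr\bigl(u(I+\sigma^\beta_t(\alpha)u)^{-1}\omega^\beta_t(x)\bigr)},\qquad u\in S_d^+ .
\]
This is literally the right-hand side of \eqref{FLT Mayerhofer Wishart} with scale parameter $\sigma:=\sigma^\beta_t(\alpha)\in S_d^+$ and non-centrality parameter $\omega:=\omega^\beta_t(x)\in S_d^+$. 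As $X$ is $S_d^+$-valued, the left-hand side is the Laplace transform of a probability measure on $S_d^+$, and a finite measure on the (self-dual) cone $S_d^+$ is determined by its Laplace transform on $S_d^+$; hence, by the very definition of the non-central Wishart law, $\Gamma(p,\omega^\beta_t(x);\sigma^\beta_t(\alpha))$ exists and coincides with the law of $X^x_t$. (Here $\alpha\neq0$ and $t>0$ guarantee $\sigma^\beta_t(\alpha)\neq0$, so the distribution is non-degenerate; one could also note that $\alpha\neq0$ already forces $p\geq\frac{d-1}{2}$ via Proposition~\ref{prop: drift condition}\ref{prop: drift condition item 1}, but this is not needed, since the law of $X^x_t$ is manifestly a probability measure and no separate existence check is required.)

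For the density claim I would invoke Lemma~\ref{lemma: existence wishart dist}\ref{ex wis dist 2}: when $p>\frac{d-1}{2}$ and the scale parameter is invertible, $\Gamma(p,\omega;\sigma)$ admits the density $F(p,\omega,\sigma,\xi)$. Under the standing hypotheses $p>\frac{d-1}{2}$ is assumed, so the only item needing a short verification is that $\sigma^\beta_t(\alpha)=2\int_0^t e^{\beta s}\alpha e^{\beta^\top s}\,ds$ is positive definite whenever $\alpha\in S_d^{++}$ and $t>0$: for a nonzero $v\in\mathbb R^d$ the integrand $v^\top e^{\beta s}\alpha e^{\beta^\top s}v$ is strictly positive (because $e^{\beta s}$ is invertible and $\alpha\in S_d^{++}$) and continuous in $s$, so its integral over $(0,t)$ is strictly positive, whence $\sigma^\beta_t(\alpha)\in S_d^{++}$. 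Substituting $\sigma=\sigma^\beta_t(\alpha)$, $\omega=\omega^\beta_t(x)$ into Lemma~\ref{lemma: existence wishart dist}\ref{ex wis dist 2} and combining with the first paragraph identifies the transition density of $X$ as $f_t(x,\xi)=F(p,\omega^\beta_t(x),\sigma^\beta_t(\alpha),\xi)$.

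I do not anticipate a genuine obstacle: the statement is essentially a bookkeeping consequence of Proposition~\ref{prop wish proc} together with Lemma~\ref{lemma: existence wishart dist}. The most delicate points are the passage from the Fourier--Laplace transform \eqref{eq: FLT Bru} (valid on the complex tube $S_d^++iS_d$) back to the Laplace transform on $S_d^+$ and the appeal to uniqueness of Laplace transforms of finite measures on a cone, together with the elementary positive-definiteness of the integrated flow $\sigma^\beta_t(\alpha)$ in the density part — which is precisely the place where the hypothesis $\alpha\in S_d^{++}$ is used.
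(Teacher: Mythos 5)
Your proof is correct and follows exactly the route the paper intends: the paper offers no separate proof, merely stating that the proposition follows by "combining Lemma \ref{lemma: existence wishart dist} and Proposition \ref{prop wish proc}," which is precisely your matching of \eqref{eq: FLT Bru} at real arguments with \eqref{FLT Mayerhofer Wishart} plus the appeal to Lemma \ref{lemma: existence wishart dist} \ref{ex wis dist 2} for the density. Your added verification that $\sigma^\beta_t(\alpha)\in S_d^{++}$ for $\alpha\in S_d^{++}$, $t>0$ is a detail the paper leaves implicit.
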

\subsection{On the central Wishart case}\label{subsec 3}
In the following we restate the characterization of the central
Wishart laws by using \cite{PeddadaRichards91}:
\begin{theorem}\label{th: mainproposition}
Let $d\geq 2$, $\sigma \in S_d^{++}$ and $p\geq 0$. The following
are equivalent:
\begin{enumerate}
\item \label{central1} Formula \eqref{eq: Laplacecentralwishart} is the Laplace transform of a probability measure $\Gamma(p,\omega;\sigma)$ on $S_d^+$.
\item \label{central2} $p\in \Lambda_d$.
\end{enumerate}
\end{theorem}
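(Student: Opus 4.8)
The plan is to establish the two implications separately. The implication \ref{central2}$\Rightarrow$\ref{central1} will be read off from Lemma \ref{lemma: existence wishart dist}, while \ref{central1}$\Rightarrow$\ref{central2} is the classical impossibility result, which the proof quotes rather than reproves. Throughout I take $p>0$: for $p=0$ the right side of \eqref{eq: Laplacecentralwishart} equals $1$, the Laplace transform of the point mass $\delta_0$, a degenerate case excluded by convention, consistently with $0\notin\Lambda_d$.

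For \ref{central2}$\Rightarrow$\ref{central1} I would apply Lemma \ref{lemma: existence wishart dist} with non-centrality parameter $\omega=0$. If $p\ge\tfrac{d-1}{2}$, part \ref{ex wis dist 1} directly produces a probability measure $\Gamma(p,0;\sigma)$ on $S_d^+$ whose Laplace transform is \eqref{eq: Laplacecentralwishart}. If $p<\tfrac{d-1}{2}$, then $p\in\Lambda_d$ forces $p=j/2$ for some integer $j$ with $1\le j\le d-2$, so that $2p=j\in\mathbb N$ and $\rank(0)=0\le 2p$; hence part \ref{ex wis dist 0} yields a probability measure (supported on $D_{2p}$) with the desired Laplace transform. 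These two cases exhaust $\Lambda_d$, so \ref{central1} holds.

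For \ref{central1}$\Rightarrow$\ref{central2}, suppose \eqref{eq: Laplacecentralwishart} is the Laplace transform of a probability measure $\mu$ on $S_d^+$. I would first normalise the scale parameter to the identity: picking an invertible $g\in M_d$ with $gg^\top=\sigma$ (for instance $g=\sigma^{1/2}$), the push-forward of $\mu$ under the linear automorphism $y\mapsto g^{-1}y(g^{-1})^\top$ of $S_d^+$ is again a probability measure on $S_d^+$, and a one-line computation using $\tr\!\big(ug^{-1}y(g^{-1})^\top\big)=\tr\!\big((g^{-1})^\top ug^{-1}\,y\big)$ together with $\det(I+AB)=\det(I+BA)$ shows that its Laplace transform is $u\mapsto\det(I+u)^{-p}$. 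So it suffices to treat $\sigma=I$. At that point the implication ``$\det(I+u)^{-p}$ is the Laplace transform of a probability measure on $S_d^+$ $\Rightarrow$ $p\in\Lambda_d$'' is exactly the theorem of Gindikin \cite{Gindikin} — the Riesz distribution attached to this transform ceases to be a positive measure when $p\in(0,\tfrac{d-1}{2})\setminus\{\tfrac12,\dots,\tfrac{d-2}{2}\}$ — as subsequently refined by Shanbhag \cite{Shanbhag} and by Peddada \& Richards \cite{PeddadaRichards91}; this I would quote, not reprove. (One may equally skip the normalisation and cite these references for general non-degenerate $\sigma$, as recorded in the introduction.)

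Accordingly the only genuinely hard ingredient is the non-positivity of the Riesz distribution on that ``missing'' half-open range of shape parameters, and it lies outside anything self-contained here. It is worth stressing that the affine-process machinery developed later in the paper offers no shortcut: it addresses the \emph{non-central} rank obstruction, and in the admissible half-integer range $p<\tfrac{d-1}{2}$ there is in any case no Wishart process whose transition law is a central $\Gamma(p;\sigma)$ — by Proposition \ref{prop: drift condition}\ref{prop: drift condition item 1} any Wishart process forces $p\ge\tfrac{d-1}{2}$ — so no contradiction can be extracted on that route.
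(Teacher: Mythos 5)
Your proposal is correct and follows essentially the same route as the paper: the sufficiency of $p\in\Lambda_d$ is obtained from Lemma \ref{lemma: existence wishart dist} \ref{ex wis dist 0} and \ref{ex wis dist 1} with $\omega=0$, and the necessity is quoted from the classical results of Gindikin, Shanbhag and Peddada--Richards (the paper cites \cite[Theorem 1]{PeddadaRichards91}, noting the exclusion of the point mass at $0$). The extra normalisation of $\sigma$ to $I$ and the explicit treatment of $p=0$ are harmless additions that the paper omits.
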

\begin{proof}
%For the reader's convenience, we start with the case that $r=d$.
\ref{central1}$\Rightarrow$\ref{central2}: This is \cite[Theorem
1]{PeddadaRichards91}, as we exclude the point mass at $0$. The
converse direction is a special case of Lemma \ref{lemma: existence
wishart dist} \ref{ex wis dist 0} and \ref{ex wis dist 1}.
\end{proof}
It is important to note that condition \ref{central2} is not
necessary, if $\sigma$ is degenerate. In fact, it is easy to prove
by use of orthogonal transformations (see
http://arxiv.org/abs/1009.3708)
\begin{corollary}\label{ex: relax}
Let $r=\rank(\sigma)$. The following are equivalent:
\begin{enumerate}
\item \label{central1} Formula \eqref{eq: Laplacecentralwishart} is the Laplace transform of a probability measure $\Gamma(p,\omega;\sigma)$ on $S_d^+$.
\item \label{central2} $p\in \Lambda_r$.
\end{enumerate}
\end{corollary}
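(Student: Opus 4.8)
The plan is to reduce to the non-degenerate case in dimension $r$ by an orthogonal change of variables, and then to transport probability measures between $S_d^+$ and $S_r^+$ along the natural embedding and the natural projection. First I would use the spectral theorem to choose $O\in O(d)$ with $O^\top\sigma O=\left(\begin{smallmatrix}\tilde\sigma&0\\0&0\end{smallmatrix}\right)=:\sigma'$, where $\tilde\sigma\in S_r^{++}$. Since $v\mapsto OvO^\top$ is a linear automorphism of the cone $S_d^+$ and, for $u\in S_d^+$ with $v:=O^\top uO$, one has $\det(I+\sigma u)=\det\!\big(O(I+\sigma'v)O^\top\big)=\det(I+\sigma'v)$, pushing a measure forward along $\xi\mapsto O\xi O^\top$ (and its inverse) shows that $\det(I+\sigma u)^{-p}$ is the Laplace transform of a probability measure on $S_d^+$ if and only if $\det(I+\sigma'v)^{-p}$ is; so I may assume $\sigma=\sigma'$ henceforth.

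Writing $u=\left(\begin{smallmatrix}u_{11}&u_{12}\\u_{12}^\top&u_{22}\end{smallmatrix}\right)\in S_d$ with $u_{11}\in S_r$, the matrix $I+\sigma'u$ is block upper triangular with diagonal blocks $I_r+\tilde\sigma u_{11}$ and $I_{d-r}$, hence $\det(I+\sigma'u)^{-p}=\det(I_r+\tilde\sigma u_{11})^{-p}$; the expression \eqref{eq: Laplacecentralwishart} thus depends on $u$ only through its upper-left $r\times r$ block. Next I would introduce the embedding $\iota\colon S_r^+\hookrightarrow S_d^+$, $\eta\mapsto\left(\begin{smallmatrix}\eta&0\\0&0\end{smallmatrix}\right)$, and the projection $\pi\colon S_d^+\to S_r^+$, $\xi\mapsto\xi_{11}$, which is well defined because the upper-left block of a positive semi-definite matrix is positive semi-definite. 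Both maps are continuous, and the identity $\langle\iota(\eta),\xi\rangle=\langle\eta,\pi(\xi)\rangle$ holds for all $\eta\in S_r^+$ and $\xi\in S_d^+$, being just the trace of a block product; this is the point that makes the two push-forwards below compatible with Laplace transforms.

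For \ref{central2}$\Rightarrow$\ref{central1}, assume $p\in\Lambda_r$. If $r\ge 2$, Theorem \ref{th: mainproposition} applied in dimension $r$ (with $\tilde\sigma\in S_r^{++}$) provides a probability measure $\tilde\mu$ on $S_r^+$ with $\mathcal L(\tilde\mu)(a)=\det(I_r+\tilde\sigma a)^{-p}$ for $a\in S_r^+$; for $r\le 1$ one instead takes the univariate Gamma law on $S_1^+=\mathbb R_+$ (resp.\ $\delta_0$ when $\sigma=0$). Then $\mu:=\iota_*\tilde\mu$ is a probability measure on $S_d^+$ and, by the identity above, $\mathcal L(\mu)(u)=\mathcal L(\tilde\mu)(\pi(u))=\det(I_r+\tilde\sigma u_{11})^{-p}=\det(I+\sigma u)^{-p}$ for all $u\in S_d^+$, which is \ref{central1}. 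For \ref{central1}$\Rightarrow$\ref{central2}, let $\mu$ be a probability measure on $S_d^+$ with $\mathcal L(\mu)(u)=\det(I+\sigma u)^{-p}$. Then $\tilde\mu:=\pi_*\mu$ is a probability measure on $S_r^+$, and the identity gives $\mathcal L(\tilde\mu)(a)=\mathcal L(\mu)(\iota(a))=\det(I_r+\tilde\sigma a)^{-p}$ for all $a\in S_r^+$; since $\tilde\sigma\in S_r^{++}$, Theorem \ref{th: mainproposition} in dimension $r$ forces $p\in\Lambda_r$ when $r\ge 2$, while for $r\le 1$ this is automatic, as $p\ge 0$ and $\Lambda_1=[0,\infty)\subseteq\Lambda_0$.

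There is no genuine obstacle; the only points requiring a little care are the block-triangular determinant identity, the fact that $\pi$ indeed maps into $S_r^+$, the self-adjointness identity $\langle\iota(\eta),\xi\rangle=\langle\eta,\pi(\xi)\rangle$ which is what makes both push-forwards respect the Laplace transform, and the low-rank cases $r\le 1$, where Theorem \ref{th: mainproposition} does not apply and one falls back on the classical univariate Gamma distribution.
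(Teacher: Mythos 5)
Your proposal is correct and is exactly the argument the paper has in mind: the paper only remarks that the corollary ``is easy to prove by use of orthogonal transformations'' (deferring details to an arXiv preprint), and your reduction via $O^\top\sigma O=\operatorname{diag}(\tilde\sigma,0)$, the block-triangular determinant identity, and the adjoint pair $\iota,\pi$ transporting Laplace transforms between $S_d^+$ and $S_r^+$ is a complete implementation of that sketch, including the $r\le 1$ edge cases.
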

As a trivial consequence, one has \begin{corollary} The following
are equivalent:
\begin{enumerate}
\item $\Gamma(p;\sigma)$ is infinitely divisible.
\item $\rank(\sigma)=1$.
\end{enumerate}
\end{corollary}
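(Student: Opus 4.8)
The plan is to reduce the claim to Corollary~\ref{ex: relax} via the standard fact, already implicit in the discussion of L\'evy's result in the introduction, that $\Gamma(p;\sigma)$ is infinitely divisible if and only if $\Gamma(p/n;\sigma)$ exists (as a probability measure on $S_d^+$) for every $n\in\mathbb N$. I would first spell this equivalence out carefully. For one direction, if $\Gamma(p/n;\sigma)$ exists for all $n$, then its $n$-fold convolution power has Laplace transform $\left(\det(I+\sigma u)\right)^{-p}$, hence coincides with $\Gamma(p;\sigma)$ by injectivity of the Laplace transform on finite measures supported on $S_d^+$; so $\Gamma(p;\sigma)$ is infinitely divisible. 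For the other direction, if $\mu_n$ is a probability measure on $S_d^+$ with $\mu_n^{*n}=\Gamma(p;\sigma)$, then $\mathcal L(\mu_n)(u)^n=\left(\det(I+\sigma u)\right)^{-p}$ for $u\in S_d^+$; since $\mathcal L(\mu_n)(u)>0$ and is continuous, it follows that $\mathcal L(\mu_n)(u)=\left(\det(I+\sigma u)\right)^{-p/n}$, i.e.\ $\mu_n=\Gamma(p/n;\sigma)$. Throughout I would assume $\sigma\neq0$ and $p>0$, the remaining degenerate cases being merely the (trivial) point mass at the origin.

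Next I would apply Corollary~\ref{ex: relax}: with $r=\rank(\sigma)$, the measure $\Gamma(q;\sigma)$ exists precisely when $q\in\Lambda_r$. Combined with the reduction above, this yields the working criterion that $\Gamma(p;\sigma)$ is infinitely divisible if and only if $p/n\in\Lambda_r$ for every $n\in\mathbb N$.

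It then remains to read off both implications from the explicit shape of $\Lambda_r$. If $\rank(\sigma)=1$, then $\Lambda_1=[0,\infty)$ contains every $p/n$, so $\Gamma(p;\sigma)$ is infinitely divisible. Conversely, for $r\geq2$ every element of $\Lambda_r$ is at least $\tfrac12$, since both the finite set $\{j/2:1\le j\le r-2\}$ and the ray $[\tfrac{r-1}{2},\infty)$ lie in $[\tfrac12,\infty)$; as $p/n\to0$, for all large $n$ we have $0<p/n<\tfrac12$, whence $p/n\notin\Lambda_r$ and $\Gamma(p;\sigma)$ is not infinitely divisible. Thus infinite divisibility forces $r\le1$, and hence $r=1$ because $\sigma\neq0$. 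The whole argument is short, and I foresee no genuine obstacle; the only step meriting a line of care is the reduction to convolution roots of Wishart type, where one invokes positivity, continuity and injectivity of the Laplace transform on finite measures supported on $S_d^+$.
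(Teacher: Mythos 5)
Your argument is correct and is precisely the "trivial consequence" the paper has in mind: the paper offers no written proof, deriving the corollary directly from Corollary~\ref{ex: relax} together with the standard identification of $n$-th convolution roots of $\Gamma(p;\sigma)$ with $\Gamma(p/n;\sigma)$, exactly as you do. Your additional care about positivity and injectivity of the Laplace transform, and about the degenerate case $\sigma=0$, only makes the omitted details explicit.
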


\section{Proof of Theorem \ref{th: maintheorem}}\label{sec: proof}
Let $(p,\omega,\sigma)\in \mathbb R_{++}\times S_d^{+}\times S_d^{++}$ such that
$\mu:=\Gamma(p,\omega;\sigma)$ is a probability measure, that is,
eq.~\eqref{FLT Mayerhofer Wishart} holds. The domain of its
moment generating function is defined as
\begin{equation*}
D(\mu):=\{u\in S_d\mid \mathcal L_\mu(u):=\int_{S_d^+}e^{-\langle
u,\xi\rangle}\mu(d\xi)<\infty\},
\end{equation*}
which is the maximal domain to which the Laplace transform, originally defined for $u\in S_d^+$ only, can be extended.
It is well known that $D(\mu)$ is a convex (hence connected) set, and we also know that
$S_d^+\subset D(\mu)$. Clearly $(I+\sigma u)$ is invertible if and only if the (symmetric) matrix
$(I+\sqrt{\sigma} u \sqrt{\sigma})$ is non-degenerate. Using these facts and the defining equation
\eqref{FLT Mayerhofer Wishart} we infer that
\begin{equation}\label{equalityofsets}
D(\mu):=\{u\in S_d\mid (I+\sqrt{\sigma}
u\sqrt{\sigma})\in S_d^{++}\}=-\sigma^{-1}+S_d^{++},
\end{equation}
and therefore $D(\mu)$ is even open. Accordingly, the natural exponential family of
$\mu$ is the family of probability measures\footnote{In order to avoid confusions with calculations in the proof of the upcoming proposition, we change here from $u$ notation to $v$, because
$u$ denotes the Fourier-Laplace variable in this paper.}
\[
F(\mu)=\left.\left\{\frac{\exp(v\xi) \mu(d\xi)}{\mathcal L_\mu(v)}\right| v\in -\sigma^{-1}+S_d^{++}\right\}.
\]
We start by stating some key properties of Wishart distributions \footnote{Some related properties can be found in
Letac and Massam \cite{Letac08}, but in a different notation. More detailed information may be found in Appendix \ref{App}}:
\begin{proposition}\label{proplemma}
\begin{enumerate}
\item \label{firstelemprop} Let $p\geq 0,\,\omega\in S_d^+$. Suppose $X$ is an $S_d^+$-valued random variable distributed according to $\Gamma(p,\omega; I)$. Let $q\in S_d^+$ and set $\sigma:=q^2$.
Then $q X q \sim \Gamma(p,q\omega q;\sigma)$\footnote{Expressed in geometric language, we say that the pushfoward of $\Gamma(p,\omega; I)$ under the map
$\xi\mapsto q \xi q$ equals $\Gamma(p,q\omega q;\sigma)$.}. In particular, $\Gamma(p,\omega; I)$ exists if and only if $\Gamma(p, q \omega q;\sigma)$ exists.
\item  \label{secondelemprop} Let $p\geq 0,\, \sigma \in S_d^{++}$ and $\omega\in S_d^+$ such that $\mu:=\Gamma(p,\omega;I)$ is a probability measure.
For $v=\sigma^{-1}-I$ we have that 
\begin{equation}\label{claim 2}
\frac{\exp(v\xi) \mu(d\xi)}{\mathcal L_\mu(v)}\sim \Gamma(p, \sigma\omega\sigma;\sigma).
\end{equation}
Conversely, if  $\Gamma(p, \sigma\omega\sigma;\sigma)$ is a well defined probability measure, so is
$\mu$, and \eqref{claim 2} holds. In particular, we have that the exponential family generated by $\mu$ is a Wishart family and equals
\[
F(\mu)= \{\Gamma(p,\sigma\omega\sigma, \sigma)\mid \sigma\in S_d^{++},\quad \sigma^{-1}-I\in D(\mu)\}.
\]
\item \label{proppoint 3} Let $\Gamma(p,\omega_0;\sigma_0)$ be a probability measure, where $\sigma_0\in S_d^{++}$. Then we have
\begin{enumerate} \item \label{proppoint 3a} $\Gamma(p,t \omega_0;\sigma_0)$ is a probability measure for each $t>0$.
\item \label{proppoint 3b} If, in addition, $\omega_0$ is invertible, then $\Gamma(p, \omega;\sigma)$ is a probability measure for each $\omega\in S_d^+$, $\sigma\in S_d^+$.
\end{enumerate}
\end{enumerate}
\end{proposition}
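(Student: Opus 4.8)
The three assertions have increasing depth, and I would prove them in the stated order. For \ref{firstelemprop} the plan is a direct computation of the Laplace transform of the pushforward. If $X\sim\Gamma(p,\omega;I)$ and $Y:=qXq$ with $q\in S_d^+$, then for $u\in S_d^+$ one has $\langle u,Y\rangle=\tr(uqXq)=\langle quq,X\rangle$ and $quq\in S_d^+$, so $\mathbb E[e^{-\langle u,Y\rangle}]=\mathcal L_{\Gamma(p,\omega;I)}(quq)$. Substituting \eqref{FLT Mayerhofer Wishart}, the claim reduces to the two identities $\det(I+quq)=\det(I+q^2u)=\det(I+\sigma u)$ (by $\det(I+AB)=\det(I+BA)$ and $\sigma=q^2$) and $quq(I+quq)^{-1}=qu(I+\sigma u)^{-1}q$; the latter is \eqref{lemming eq} applied with $A=qu,\ B=q$, in the form $AB(I+AB)^{-1}=A(I+BA)^{-1}B$. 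Cyclicity of the trace then rewrites $\tr\big(quq(I+quq)^{-1}\omega\big)$ as $\tr\big(u(I+\sigma u)^{-1}(q\omega q)\big)$, so the Laplace transform of $qXq$ coincides with that of $\Gamma(p,q\omega q;\sigma)$, and uniqueness of the Laplace transform on $S_d^+$ finishes the identification. The asserted equivalence of existence is then immediate in one direction, and in the other (for invertible $q$) follows by running the same computation for the pushforward of $\Gamma(p,q\omega q;\sigma)$ under $\xi\mapsto q^{-1}\xi q^{-1}$.

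For \ref{secondelemprop} I would work with the exponentially tilted probability measure $\tilde\mu(d\xi)=\mathcal L_\mu(v)^{-1}e^{-\langle v,\xi\rangle}\mu(d\xi)$, which is well defined because $v=\sigma^{-1}-I\in D(\mu)=-I+S_d^{++}$ by \eqref{equalityofsets}. Its Laplace transform is $\mathcal L_{\tilde\mu}(u)=\mathcal L_\mu(u+v)/\mathcal L_\mu(v)$, valid for $u\in S_d^+$ since then $u+v\in D(\mu)$; and on $D(\mu)$ the closed form $\mathcal L_\mu(w)=\det(I+w)^{-p}e^{-\tr(w(I+w)^{-1}\omega)}$ holds by analytic continuation from $S_d^{++}$. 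Using $I+v=\sigma^{-1}$ and $I+u+v=\sigma^{-1}+u$, the determinant ratio collapses to $\det(I+\sigma u)^{-p}=\det(\sigma(\sigma^{-1}+u))^{-p}$, while for the exponent I would write $(u+v)(I+u+v)^{-1}=I-(\sigma^{-1}+u)^{-1}$ and $v(I+v)^{-1}=I-\sigma$, and then match the outcome against $-\tr\big(u(I+\sigma u)^{-1}\sigma\omega\sigma\big)$ by means of $u(I+\sigma u)^{-1}\sigma=I-(I+u\sigma)^{-1}$ and $\sigma(I+u\sigma)^{-1}=(I+\sigma u)^{-1}\sigma$ (both from \eqref{lemming eq}) together with cyclicity; this bookkeeping of the non-commuting factors $u,\sigma,\omega$ is the computational heart of the proposition. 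The converse runs the same computation backwards: if $\nu:=\Gamma(p,\sigma\omega\sigma;\sigma)$ exists, then $D(\nu)=-\sigma^{-1}+S_d^{++}$ contains $I-\sigma^{-1}$, so tilting $\nu$ by $I-\sigma^{-1}$ produces a probability measure with Laplace transform $\det(I+u)^{-p}e^{-\tr(u(I+u)^{-1}\omega)}$, i.e. $\mu$ exists, and \eqref{claim 2} is the inverse relation. Finally $F(\mu)=\{\mathcal L_\mu(v)^{-1}e^{-\langle v,\xi\rangle}\mu(d\xi)\mid v\in D(\mu)\}$, and since $v\mapsto\sigma:=(I+v)^{-1}$ is a bijection from $D(\mu)$ onto $S_d^{++}$, the displayed description of $F(\mu)$ follows from \eqref{claim 2}.

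For \ref{proppoint 3} both parts are deduced from \ref{firstelemprop}, \ref{secondelemprop}, and L\'evy's continuity theorem. Using \ref{firstelemprop} with $q=\sigma_0^{1/2}$ I first reduce to scale $I$, i.e. I may assume $\Gamma(p,\Omega_0;I)$ is a probability measure, where $\Omega_0:=\sigma_0^{-1/2}\omega_0\sigma_0^{-1/2}$ has the same rank as $\omega_0$. For \ref{proppoint 3a}: given $s>0$, \ref{secondelemprop} with $\sigma=sI$ shows $\Gamma(p,s^2\Omega_0;sI)$ is a probability measure, and \ref{firstelemprop} with $q=s^{1/2}I$ turns this into $\Gamma(p,s\Omega_0;I)$; hence $\Gamma(p,t\Omega_0;I)$ exists for every $t>0$, and undoing the reduction gives \ref{proppoint 3a}. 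For \ref{proppoint 3b}, assume in addition that $\omega_0$ (hence $\Omega_0$) is invertible. For any $r\in S_d^{++}$, \ref{secondelemprop} with $\sigma=r^2$ followed by \ref{firstelemprop} with $q=r$ shows $\Gamma(p,r\Omega_0 r;I)$ is a probability measure; and for every $\Omega\in S_d^{++}$ there is an $r\in S_d^{++}$ with $r\Omega_0 r=\Omega$ (e.g. $r=\Omega_0^{-1/2}(\Omega_0^{1/2}\Omega\,\Omega_0^{1/2})^{1/2}\Omega_0^{-1/2}$), so $\Gamma(p,\Omega;I)$ is a probability measure for every $\Omega\in S_d^{++}$. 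Approximating $\Omega\in\partial S_d^+$ by $\Omega+\frac1nI$ and applying L\'evy's continuity theorem (the limit Laplace transform $\det(I+u)^{-p}e^{-\tr(u(I+u)^{-1}\Omega)}$ is continuous at $u=0$) extends this to all $\Omega\in S_d^+$. Now for $\sigma\in S_d^{++}$ and $\omega\in S_d^+$, \ref{firstelemprop} with $q=\sigma^{1/2}$ applied to $\Gamma(p,\sigma^{-1/2}\omega\sigma^{-1/2};I)$ shows $\Gamma(p,\omega;\sigma)$ is a probability measure; finally, for degenerate $\sigma$ a last passage $\sigma+\frac1nI\downarrow\sigma$ with L\'evy's theorem (note $\det(I+\sigma u)\ge1$ on $S_d^+$, so the Laplace transform stays continuous) completes \ref{proppoint 3b}.

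The genuinely delicate point is the algebra in \ref{secondelemprop} — keeping $u,\sigma,\omega$ in the correct order while repeatedly reducing via \eqref{lemming eq} — together with the structural fact underlying \ref{proppoint 3b} that one cannot change $\omega$ alone at fixed scale: a tilt (\ref{secondelemprop}) alters $\omega$ but also the scale, a linear pushforward (\ref{firstelemprop}) restores the scale, and only their composition produces a pure change of non-centrality; the solvability of $r\Omega_0 r=\Omega$ in $S_d^{++}$ and two invocations of L\'evy's theorem (of exactly the kind already used in the proof of Lemma \ref{lemma: existence wishart dist}) then do the rest. Everything else is routine matrix bookkeeping.
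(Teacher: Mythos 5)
Your proposal is correct and follows essentially the same route as the paper: part \ref{firstelemprop} by direct computation of the Laplace transform of the pushforward using cyclicity of the trace, $\det(I+AB)=\det(I+BA)$ and the identity \eqref{lemming eq}; part \ref{secondelemprop} by exponential tilting with $v=\sigma^{-1}-I$ (your reduction via $w(I+w)^{-1}=I-(I+w)^{-1}$ is just a reorganization of the paper's algebra, and matches the constant $c=\tr((\sigma-I)\omega)$); and part \ref{proppoint 3} by composing \ref{firstelemprop} and \ref{secondelemprop} with a L\'evy continuity argument for degenerate parameters, with your explicit solution $r=\Omega_0^{-1/2}(\Omega_0^{1/2}\Omega\,\Omega_0^{1/2})^{1/2}\Omega_0^{-1/2}$ of $r\Omega_0 r=\Omega$ playing the role of the paper's choice of $q_1$. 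The only differences are cosmetic (you normalize to scale $I$ first, whereas the paper chains the tilts and pushforwards directly), so no further comment is needed.
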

\begin{proof}
Let $\mathbb E$ be the corresponding expectation operator. By repeated use of the cyclic property of the trace and by the product formula for the determinant, we have
\begin{align*}
\mathbb E[e^{-\langle u, q X q\rangle}]&=\mathbb E[e^{-\langle q u q, X\rangle}]=\det(I+ quq)^{-1}\exp(-\tr( quq(I+quq)^{-1}\omega))\\
&=\det(I+ \sigma u)^{-1}\exp(-\tr( uq(I+quq)^{-1}q^{-1} q \omega q))\\&=\det(I+ \sigma u)^{-1}\exp(-\tr( u (I+\sigma u)^{-1} q \omega q)),
\end{align*}
which proves assertion \ref{firstelemprop}. Next we show \ref{secondelemprop}. We note first, that by
\eqref{equalityofsets} we have that $v=\sigma^{-1}-1\in D(\mu)$. Hence exponential tilting is admissible. Furthermore, we have
\begin{equation}\label{eq: prop FLT}
\int _{S_d^+}e^{-\langle u+v,\xi\rangle}\Gamma(p,\omega; I)(d\xi)=\det(1+(u+v))^{-p}\exp(-\tr((u+v)(1+u+v)^{-1}\omega)),
\end{equation}
and setting $v=\sigma^{-1}-1$ we obtain
\[
1+u+v=\sigma^{-1}(1+\sigma u).
\]
Hence the first factor on the right side of eq.~\eqref{eq: prop FLT} is proportional to $\det(1+\sigma u)^{-p}$. It remains to show
that 
\begin{equation}\label{eq: what we would like to have}
-\tr((u+v)(1+u+v)^{-1}\omega)=c+\tr(u(1+\sigma u)^{-1} \sigma\omega\sigma)
\end{equation}
for some real constant $c$, because then the right side
of \eqref{eq: prop FLT} is proportional to the Laplace transform of $\Gamma(p,\sigma\omega\sigma;\sigma)$. To this end, we do some elementary algebraic manipulations:
\begin{align*}
-(u+v)(I+u+v)^{-1}\omega&=-(u-1+\sigma^{-1})[\sigma^{-1}(1+\sigma u)]^{-1}  \omega\\&=-(-1+\sigma^{-1}+u)(\sigma^{-1}+u)^{-1}\omega\\&=-\omega+(\sigma-\sigma)\omega+(\sigma^{-1}+u)^{-1}\omega\\&=(\sigma-I)\omega-\sigma(\sigma^{-1}+u)(\sigma^{-1}+u)^{-1}\omega+(\sigma^{-1}+u)^{-1}\omega\\&=(\sigma-I)\omega-\sigma u (\sigma^{-1}+u)^{-1}\omega\\&=(\sigma-I)\omega-\sigma u (I+\sigma u)^{-1}\sigma\omega.
\end{align*}
We set now $c:=\tr((\sigma-I)\omega)$ which is the real number we talked about before. Taking trace and performing cyclic permutation inside,
we obtain \eqref{eq: what we would like to have}, and therefore the idendity \eqref{claim 2} is shown. The assertion concerning the exponential family follows by the very definition of the latter.

We may therefore proceed to \ref{proppoint 3} which is proved by
repeatedly applying \ref{firstelemprop} and \ref{secondelemprop}: Let
$\Gamma(p,\omega_0;\sigma_0)$ be a probability measure. Then by
\ref{secondelemprop}, also $\Gamma(p,\sigma_0^{-1}\omega_0\sigma_0^{-1}; I)$ is one. Let $q_1$ such that
$q_1^2=\sigma_1\in S_d^{++}$. We may write
$\Gamma(p,\sigma_0^{-1}\omega_0\sigma_0^{-1};I)=\Gamma(p,q_1^{-1}(q_1 \sigma_0^{-1}\omega_0\sigma_0^{-1}q_1)q_1^{-1};I)$, and by
applying \ref{firstelemprop}, we obtain the pushforward measure
$\Gamma(p,q_1 \sigma_0^{-1}\omega_0\sigma_0^{-1}q_1;\sigma_1)$. By \ref{secondelemprop} we
have that $\Gamma(p, q_1^{-1}\sigma_0^{-1}\omega_0\sigma_0^{-1} q_1^{-1};I)$ is a probability measure
as well, and once again by
 \ref{secondelemprop} we infer that for all $\sigma\in S_d^{++}$, $\Gamma(p, \sigma q_1^{-1}\sigma_0^{-1}\omega_0\sigma_0^{-1} q_1^{-1}\sigma,\sigma)$
 is a probability.  We use this fact to prove both parts of the assertion. Without loss of generality we assume that
$\sigma$ is non-degenerate, because in the case $\sigma\in\partial S_d^+$ we may invoke L\'evy's continuity theorem\footnote{Strictly speaking, L\'evy's continuity theorem applies to characteristic functions. However, in the Wishart case, the right side of \eqref{FLT Mayerhofer Wishart} can even be extended to even the Fourier-Laplace transform with ease, and by preserving its functional form.}.
Setting $q_1=1/\sqrt{t} I$ and  $\sigma=\sigma_0$, we see that \ref{proppoint 3a} holds. For $\omega_0\in S_d^{++}$ we choose $q_1\in
S_d^+$ such that $q_1^{-1} \sigma_0^{-1}\omega_0\sigma_0^{-1} q_1^{-1}=\sigma^{-1}\omega\sigma^{-1}$, which allows to conclude \ref{proppoint 3b}.

\end{proof}
Finally, we deliver our proof of Theorem \ref{th: maintheorem}:
\begin{proof}
Let $p>0$ such
that for some $\omega_0\in S_d^+$, $\sigma\in S_d^{++}$, the right
side of \eqref{FLT Mayerhofer Wishart} is the Laplace transform of a
non-trivial probability measure $\Gamma(p,\omega_0;\sigma)$. By
Proposition \ref{proplemma} \ref{proppoint
3a}, we have that $\Gamma(p,\omega_0/n;\sigma)$ is a probability
measure for each $n\in\mathbb N$. Letting $n\rightarrow\infty$ and
invoking L\'evy's continuity theorem, we obtain that
$\Gamma(p;\sigma)$ is a probability measure. But then by the characterization of central Wishart laws,
Theorem \ref{th: mainproposition} \ref{central2}, we have that $p\in\Lambda_d$.

Let now $p_0\in \Lambda_d\setminus [\frac{d-1}{2},\infty)$, and let
us assume, by contradiction, that there exist $(\omega_0,\sigma)\in
S_d^+\times S_d^{++}$, $\rank(\omega_0)>2p_0+1$ such that
$\Gamma(p_0,\omega_0;\sigma)$ is a probability measure. Pick now
$\omega_1\in S_d^+$ such that $\omega^*:=\omega_1+\omega_0$ has
$\rank(\omega^*):=\rank(\omega_1)+\rank(\omega_0)=d$, and set
$p_1:=\frac{d-\rank(\omega_0)}{2}$. By construction $2p_1=
\rank(\omega_1)$, and
$p_1\in\Lambda_d\setminus[\frac{d-1}{2},\infty)$. Hence Proposition
\ref{lemma: existence wishart dist} \ref{ex wis dist 0} implies the
existence of a non-central Wishart distribution
$\Gamma(p_1,\omega_1,\sigma)$. Note that $p^*:=p_0+p_1\in
\Lambda_d\setminus[\frac{d-1}{2},\infty)$ and that by convolution
\[
\Gamma(p^*,\omega^*,\sigma):=\Gamma(p_0,\omega_0,\sigma)\star \Gamma(p_1,\omega_1,\sigma)
\]
is a probability measure as well. Since $\omega^*$ is of full rank, we have by Proposition \ref{proplemma}
\ref{proppoint 3b} that $\Gamma(p^*,\omega;\sigma)$ is a probability measure for all
$(\omega,\sigma) \in (S_d^{+})^2$. Hence $\Gamma(p^*, \omega ;t\sigma)$ exists for all $(t,\omega,\sigma)\in \mathbb R_+\times
(S_d^+)^2$.

We proceed by reverse engineering of the results of section \ref{subsec 2}. Pick any $\alpha\in
S_d^+\setminus\{0\}$. For each $(t,x)\in\mathbb R_+\times S_d^+$ we let $p_t(x,d\xi)$ be the probability measure
given by the Laplace transform \begin{equation}\label{eq: FLT Bru1} \int_{S_d^+} e^{-\langle
u,\xi\rangle}p_t(x,d\xi)=\left(\det(I+2t\alpha u) \right)^{-p^*} e^{\tr\left(-u\left(I+2t\alpha
u\right)^{-1}\,x\right)},
\end{equation}
(cf.~\eqref{eq: FLT Bru} for $\beta=0$). By the proof of Proposition
\ref{prop wish proc} we know that $\phi(t,u):=p^*\log (I+2t\alpha
u)$ and $\psi(t,u):=u\left(I+2t\alpha u\right)^{-1}$ satisfy the
system of Riccati equations \eqref{eq: phi}--\eqref{eq: psi} with
$\beta=0$. From a density argument it follows that the function
$p_t(x,d\xi)$ satisfies the Chapman-Kolmogorov equation, hence it is
the transition function of a Markov process $X$ on $S_d^+$ and by
construction the Laplace transform is exponentially affine in the
state variable $x$. Hence $X$ is an affine process in the sense of
\cite{CFMT}, with constant drift parameter $b=2p^* \alpha$ and
diffusion coefficient $\alpha$. But $b=2p^*\alpha\not\geq
(d-1)\alpha$, which contradicts the drift condition formulated in
Proposition \ref{prop: drift condition} \ref{prop: drift condition
item 1}. Therefore $\rank(\omega_0)\leq 2p_0+1$ and we have proved
the first part of the theorem.

The second part of the theorem follows from Lemma \ref{lemma:
existence wishart dist} \ref{ex wis dist 0} and \ref{ex wis dist 1}.
\end{proof}
\begin{appendix}

\section{Remarks on alternative definitions of the Wishart Distributions}\label{App}
A number of different notations and definitions of Wishart distributions appear in the literature.
This paper uses several technical tools which are presented in Letac and Massam's work \cite{Letac08}, and therefore we have chosen a notation
which is closely related to the latter. 

Letac and Massam use instead of $\Gamma(p,\omega;\sigma)$ the parameterized family $\gamma(p,a;\sigma)$, where
$\omega$ is replaced by $a:=\sigma^{-1}\omega \sigma^{-1}$. Accordingly
\eqref{FLT Mayerhofer Wishart} can be written in the form
\begin{equation}\label{FLT Letac}
\mathcal L (\gamma(p,a;\sigma))(u)= \left(\det(I+\sigma u)\right)^{-p}e^{-\tr(u(I+\sigma
u)^{-1}\sigma a\sigma)},\quad u\in S_d^+.
\end{equation} Note that this requires $\sigma$ to be invertible. Other authors
use densities to define Wishart distributions. There are, however, two notable disadvantages of using densities rather than the Laplace transform
or the characteristic function:
\begin{itemize}
\item A density need not always exist: If $\sigma$ is degenerate, $\Gamma(p, \omega;\sigma)$
 is not absolutely continuous with respect to the Lebesgue measure on $ S_d^+$. To see this
we assume for a contradiction that $\Gamma(p,\omega;\sigma)$ has a Lebesgue density, for some $\sigma$ of rank $r<d$.
Let $X$ be an $S_d^+$--valued random variable distributed according to $\Gamma(p,\omega;\sigma)$. Since linear transformations do not affect the property of having a density
and since the non-central Wishart family is invariant under linear transformations (this is easy to check), we may without loss of generality assume that $\sigma=\diag(0,I_r)$, where $I_r$ is the $r\times r$ unit matrix. Consider the projection
\[
\pi_r: x=(x_{ij})_{1\leq i,j\leq d}\mapsto \pi_r(x):=(x_{ij})_{1\leq i,j\leq r}.
\]
A simple algebraic manipulation yields that the Laplace transform of $\pi_r(X)$ equals
\[
e^{-\tr(\pi_r(\omega)v)}, \quad v\in S_{r}^+,
\]
which is the Laplace transform of the unit mass concentrated at $\pi_r(\omega)$. But the pushforward of a measure with density under a projection must have a density again. This yields the desired contradiction.
\item The density for non-central Wishart distributions is complicated, as it is a series expansion
in zonal polynomials, see \cite{Letac08}. 
\end{itemize}

As we do not work with the explicit form of the densities of non-central Wishart distributions,  we did not need to specify
the precise form of the latter in Lemma  \ref{lemma: existence wishart dist} \ref{ex wis dist 2}. 

Our use of the Laplace transform throughout the paper is due to the use of affine processes, which is a class of Markov processes with the key defining property that
their Fourier-Laplace transform is of exponentially affine form in the state variable (see, subsection \ref{subsec 2}, in particular the defining equation \eqref{eq char exp}). One of the important consequences of this
property is that the affine exponents can be determined by solving a system of ordinary differential equations, the so-called Riccati Differential Equations \eqref{eq: phi}--\eqref{eq: psi}, rather than the Forward--Kolmogorov--PDE for the Laplace transform,
\[
\partial_t \Phi(t,u,x)=\mathcal A\Phi(t,u,x),\quad \Phi(0,u,x)=\exp(-\tr(ux)),
\]
where $\mathcal A$ denotes the generator of the affine process. In other words, the last equation can be solved by an Ansatz
of the form $\Phi(t,u,x):=\exp(-\phi(t,u)-\tr(\psi(t,u))$, as it leads to the Riccati ODEs and let us avoid solving for complicated parabolic PDEs. For more technical details and a complete theory of matrix-variate affine processes we refer the interested reader to \cite{CFMT}. 

Even though specifying Wishart distributions by means of their densities is very natural, the best known construction is by
pushforwards of normal distributions under certain quadratic forms: Let $\xi_1,\xi_2,\dots,\xi_k$ be a sequence of $\mathbb R^d$--valued normally distributed
random variables with mean vectors $\mu_i\in\mathbb R^d$ and covariance matrix $\Sigma$. By using Lemma \ref{lemma:
existence wishart dist} \ref{ex wis dist minilemma} we  infer that the $S_d^+$--valued random variable $\Xi:=\sum_{i=1}^k\xi_i\xi_i^\top$
has distribution $\Gamma(p,\omega;\sigma)$, where $p=2k$, $\omega=\sum_{i=1}^k \mu_i\mu_i^\top$ and $\sigma=2\Sigma$.

In Gupta and Nagar's notation \cite{GuptaNagar01} this reads as follows. We define the $d\times k$ matrix $M=(\mu_1,\dots,\mu_k)$ and
the $d\times d$ matrix $\Theta:=\Sigma^{-1}MM^\top$. Using the matrix-variate normal distribution, we have a $d\times k$ matrix-valued random variable $X:=(\xi_1,\xi_2,\dots,\xi_k)$ which is distributed according to $\mathcal N_{d,k}(M,\Sigma\otimes I)$, and 
\begin{equation}\label{qu ex}
XX^\top\equiv \Xi\sim\mathcal W_d(k,\Sigma,\Theta),
\end{equation}
where $k$ is the shape parameter, $\Sigma$ is the scale parameter, and the parameter of non-centrality equals $\Theta$ (\cite[Theorem 3.5.1]{GuptaNagar01}). As with Letac and Massam's class of generalized non-central distributions,
this imposes that $\Sigma$ must be invertible. Accordingly, the Laplace transform of $\Xi$ is given by \cite[Theorem 3.5.3]{GuptaNagar01}
\begin{equation}\label{guptadef}
\mathbb E[e^{-\tr(u\Xi)}]=\det(I+2\Sigma u)^{-k/2}e^{-\tr(\Theta(I+2\Sigma u)^{-1}\Sigma u)}.
\end{equation}
When $\Sigma=I$ then we see that $\mathcal W_d(k,\Sigma,\Theta=MM^\top)$ equals $\Gamma(k,MM^\top;2\Sigma)$. Let us now have a look
at the general case, where $\Sigma\neq I$ and where $\Theta$ can be any positive semidefinite matrix. We observe that the right side of eq.~\eqref{guptadef} can be also written in the form 
\[
\mathbb E[e^{-\tr(u\Xi)}]=\det(I+2 u\Sigma)^{-k/2}e^{-\tr(\Theta u\Sigma (I+2u\Sigma)^{-1})},
\]
which is the notation found in \cite[equation (2)]{PeddadaRichards91}. To see this, one may use 
for the first factor the multiplicativity of the determinant, while the second factor follows from the identity
\[
[(I+2\Sigma u)^{-1}\Sigma u]^{-1}=(\Sigma u)^{-1}(I+2\Sigma u)=u^{-1}\Sigma^{-1}+2I
\]
as well as
\[
[u\Sigma (I+2 u\Sigma)^{-1}]^{-1}=\Sigma^{-1}u^{-1}+2I
\]
and the symmetry of $\Theta,\Sigma$ and $u$. 

We finally show that $\mathcal W_d(k,\Sigma,\Theta)=\Gamma(k/2,(\Theta\sigma+\sigma\Theta)/4;\sigma)$. The exponent on the right hand side of eq.~\eqref{guptadef}
can be rewritten as
\begin{align*}
\tr(\Theta(I+\sigma u)^{-1}\sigma u/2)&=\tr( (\sigma u)^{-1}+\sigma^{-1})^{-1}\Theta)=\tr( (u^{-1}+\sigma^{-1})^{-1}\Theta\sigma/2)\\&=\tr( (u^{-1}+\sigma^{-1})^{-1}\frac{\Theta\sigma+\sigma\Theta}{4})
\end{align*}
and since $\Theta,\Sigma$ are positive semidefinite, also $\Theta\sigma+\sigma\Theta$ is positive semidefinite. However, for invertible $\Sigma$, the map $X\mapsto \Sigma X+X\Sigma$
is injective \footnote{This can be seen by diagonalizing $\Sigma$.}, but not surjective \footnote{It is well known that any linear transformation
on $S_d^+$ is of the form $X\mapsto gXg^\top$, where $g$ is a real, not necessarily symmetric, $d\times d$ matrix.}, in general (unless $\Sigma$ is a multiple of the unit matrix). Hence the class of non-central Wishart distributions used in this paper is strictly larger than the classes in the standard literature, if we insist on positive semidefinite non-centrality parameters
$\Theta$. If we, however, allow (not nessesarily positive semidefinite) non-centrality parameters of the form $\Theta=2\sigma^{-1}\omega$, where $\omega\in S_d^+$, then all the three mentioned Wishart classes coincide. This definition is also in line with the quadratic construction of $\Xi$, see eq.~\eqref{qu ex}. Note, however, that \cite{PeddadaRichards91}
imposes a symmetric non-centrality parameter, which means that their class of Wishart distributions is strictly smaller than ours. Hence Corollary \ref{corollary eaton}
comprises a more general situation than \cite[Theorem 2]{PeddadaRichards91}.
\end{appendix}

\end{document}